\newcounter{contador}
\newtheorem{propo}[contador]{Proposition}
\newtheorem{teo}[contador]{Theorem}
\newtheorem{lem}[contador]{Lemma}
\newtheorem{nota}[contador]{Remark}
\newtheorem{corol}[contador]{Corollary}
\newcommand{\rec}{\noindent} 
\newcommand{\dps}{\displaystyle} 
\newcommand{\bx}{{\mathbf x}}
\newcommand{\by}{{\mathbf y}}
\newcommand{\K}{{\mathbb K}}
\newcommand{\R}{{\mathbb R}}
\newcommand{\C}{{\mathbb C}}
\newcommand{\N}{{\mathbb N}}
\title{Global periodicity conditions for maps and recurrences\\ via Normal Forms}
\author{Anna Cima$^{(1)}$, Armengol Gasull$^{(1)}$ and V\'{\i}ctor Ma\~{n}osa $^{(2)}$
\\*[.1truecm]
{\small \textsl{$^{(1)}$ Dept. de Matem\`{a}tiques, Facultat de Ci\`{e}ncies,}}
\\*[-.25truecm] {\small \textsl{Universitat Aut\`{o}noma de Barcelona,}}
\\*[-.25truecm] {\small \textsl{08193 Bellaterra, Barcelona, Spain}}
\\*[-.25truecm] {\small \textsl{cima@mat.uab.cat, gasull@mat.uab.cat}}
\\*[-.25truecm]
\\*[-.25truecm] {\small \textsl{$^{(2)}$ Dept. de Matem\`{a}tica Aplicada III (MA3),}}
\\*[-.25truecm] {\small \textsl{Control, Dynamics and Applications Group (CoDALab)}}
\\*[-.25truecm] {\small \textsl{Universitat Polit\`{e}cnica de Catalunya (UPC)}}
\\*[-.25truecm] {\small \textsl{Colom 1, 08222 Terrassa, Spain}}
\\*[-.25truecm] {\small \textsl{victor.manosa@upc.edu}}}
\begin{document}
\maketitle


\begin{abstract}
We face the problem of characterizing the  periodic cases in parametric families
of  rational diffeomorphisms of $\K^k$, where $\K$ is $\R$ or $\C$, having a fixed
point. Our approach relies   on the Normal Form Theory, to obtain necessary
conditions for the existence of a formal linearization of the map, and on the
introduction of a suitable rational parametrization of the parameters of the
family. Using these tools we  can find a finite set of  values  $p$ for which the
map can be $p$-periodic, reducing the problem of finding the parameters for which
the  periodic cases appear to  simple computations. We apply our results to
several two and three dimensional classes of polynomial or rational maps. In
particular we find the global periodic cases for several Lyness type recurrences.
\end{abstract}


\rec {\sl 2000 Mathematics Subject Classification:} \texttt{37G05, 39A11, 39A20,
37C05}

\rec {\sl Keywords:} Periodic maps; Linearization; Normal Forms; Rational
parametrizations; Globally periodic recurrences; Lyness recurrences.\newline

\vfill

\section{Introduction}

A map $F$ such that $F^p(x)\equiv x, $ for some $p\in\N$ and for all $x$  for which $F^p$ is well
defined,  will be called a {\it  periodic
map}. If $p$ is the smallest positive integer with this property, then $F$ is called {\it $p$-periodic}.
 In this paper we treat the problem of characterizing the $p$-periodic cases
in parametric families of rational maps of $\K^k$, where $\K$ is
$\R$ or $\C$, having a fixed point.

When $F$ is a $p$-periodic differentiable map having a fixed point,
$\bx_0$, it is well-known that $(DF(\bx_0))^p=\operatorname{Id}$. In
fact this is a simple consequence of the chain rule. As we will see
in Proposition~\ref{lineal}, $m=p$ is the smallest positive integer
number such that $(DF(\bx_0))^m=\operatorname{Id}$. This simple
result allows to treat in a easy way the periodicity problem  when a
value $p$ such that $(DF(\bx_0))^p=\operatorname{Id}$ is known. For
instance if $F$ has a fixed point $\bx_0$ such that
$(DF(\bx_0))^2=\operatorname{Id}$ then if $F$ is $p$-periodic then
$p$ must be $2,$ and not $p=2m,\,m\in\N$ as we could think in
principle,
 and then we simply have to check whether $F^2=\operatorname{Id}$ or not.

In general, given a parametric family of maps $F_{\mathbf a}$,
$\mathbf{a}\in\K^m$, the most difficult problem for finding the
periodic maps is to determine which are the possible values $p$ such
that there exists some $\mathbf{a}$ such that  $F_{\mathbf a}$ is
$p$-periodic. The tools that we will introduce in this paper will
allow to find a finite set of possible values of $p$ for which the
map can be $p$-periodic, converting the problem of finding these
values of $\mathbf a$
 into a computational problem.

Proposition~\ref{lineal} as well as our approach to the
characterization of $p$-periodic maps via Normal Form Theory are
based on the   Montgomery-Bochner Theorem, see~\cite{MZ}. It will be
recalled and proved in Section~\ref{mmbb}. In a few words it says
that any $p$-periodic, $\mathcal{C}^1$-map  with a fixed point is
locally conjugated with the linear map $L(\bx)=DF(\bx_0)\bx$, and so
{\it locally linearizable}. Notice that the differentiability
condition is necessary since it is well known that there are
periodic involutions (i.e. $F^2=\mathrm{Id}$) given by homemorphisms
with fixed points which are not linearizable, see~\cite{B}.

Hence any  $p$-periodic case in a given family with fixed points
can be locally linearized. Thus, the application of  a suitable Normal Form
algorithm, will  give necessary  conditions for  the existence of the
linearization. As we will see, these conditions are  sometimes also sufficient.

We remark that this approach does not cover the problem in its full
generality, because there are periodic
diffeomorphisms without fixed points in $\R^k$ with $k\geq 7$, see \cite{HKMS,K}.

It is well-known that  the Normal Form algorithms often lead to very complicated
expressions which are difficult to handle  when dealing with the given parameters
of the map. Sometimes, these obstructions can be significatively softened by
introducing new parameters \emph{rationally depending} on the \emph{old} ones, and
such that the coordinates of the fixed points as well as the eigenvalues of the
jacobian matrix at these fixed points,  depend rationally on these \emph{new}
parameters. This is the second main characteristic of our approach, when dealing
with concrete applications.

The Normal Form Theory is briefly recalled in Section~\ref{norderpc}. In Section
\ref{planar} we obtain some results for planar maps in the case that the linear
part of $F$ at the fixed point  is given by a matrix $\mathrm{diag}(\alpha,\beta)$
with $\alpha\beta=1$,  or $\mathrm{diag}(\alpha,1)$. As first applications of the
method, we get:

\begin{teo}\label{aplgene} Consider a smooth  complex map of the
form \begin{equation}\label{Fdeteo1} F(x,y)=\left(\alpha x\,+\sum_{i+j\geq 2}
f_{i,j}x^iy^j, \frac{1}{\alpha}\,y\,+\sum_{i+j\geq 2}
g_{i,j}x^iy^j\right),\end{equation} where $\alpha$ is a primitive $p$-root of
unity, $p\ge5$. Then the conditions
$\mathcal{P}_1(F)=\mathcal{P}_2(F)=\mathcal{P}_3(F)=0$ are necessary for $F$ to be
$p$-periodic, where
\begin{align*}
\mathcal{P}_1(F):=& \left( f_{2,1}+f_{1,1}g_{1,1} \right) {\alpha}^{4}-f_{1,1}
 \left( 2f_{2,0}-g_{1,1} \right) {\alpha}^{3}+ \left(
2g_{2,0}f_{0,2}-f_{1,1}f_{2,0}+f_{1,1}g_{1,1} \right) { \alpha}^{2}\\
&-\left( f_{2,1}+f_{1,1}f_{2,0} \right) \alpha+ f_{1,1}f_{2,0},\\
\mathcal{P}_2(F):=&g_{0,2}g_{1,1}{\alpha}^{4}- \left( g_{1,2}+g_{0,2}g_{1,1}
\right) {\alpha}^{3}+
\left( f_{1,1}g_{1,1}+2g_{2,0} f_{0,2}-g_{0,2}g_{1,1} \right) {\alpha}^{2}\\
&+g_{1,1}
 \left( -2g_{0,2}+f_{1,1} \right) \alpha+f_{1,1}g_{1,1}+
g_{1,2},
\end{align*}
and $\mathcal{P}_3(F)$ is given in Appendix A.
\end{teo}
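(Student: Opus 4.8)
The plan is to reduce the statement to a normal-form computation, using that $p$-periodicity forces formal linearizability. Since the map fixes the origin and $DF(0,0)=\operatorname{diag}(\alpha,1/\alpha)$, Proposition~\ref{lineal} together with the Montgomery--Bochner Theorem shows that a $p$-periodic $F$ is locally conjugate to its linear part $L(x,y)=(\alpha x,\,y/\alpha)$; concretely the averaging diffeomorphism $H=\tfrac1p\sum_{j=0}^{p-1}L^{-j}\circ F^{\circ j}$ is a near-identity map with $H\circ F=L\circ H$. Reading this conjugacy at the level of Taylor series, $F$ is \emph{formally} conjugate to $L$, so the formal normal form of $F$ is exactly linear. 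Hence every resonant coefficient of the normal form of order $\ge 2$ must vanish, and the conditions $\mathcal{P}_1=\mathcal{P}_2=\mathcal{P}_3=0$ will be precisely (the numerators of) the first few of these vanishing coefficients. The whole task is therefore to locate the relevant resonances and compute them.

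First I would carry out the resonance analysis. With eigenvalues $\lambda_1=\alpha$, $\lambda_2=1/\alpha$, a monomial $x^{m_1}y^{m_2}$ in the $s$-th component is resonant exactly when $\alpha^{m_1-m_2}=\lambda_s$, that is
\begin{equation*}
m_1-m_2\equiv 1 \pmod p \quad(\text{first component}),\qquad m_1-m_2\equiv -1 \pmod p \quad(\text{second component}).
\end{equation*}
Treating $\alpha$ symbolically as a primitive $p$-root of unity with $p\ge 5$, the only resonances of degree $2,3,4,5$ are $x^2y$ in the first component and $xy^2$ in the second (degree $3$), and $x^3y^2$ in the first and $x^2y^3$ in the second (degree $5$); degrees $2$ and $4$ carry no resonances. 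This is where $p\ge 5$ enters, keeping the degree-$2$ and degree-$4$ monomials off resonance. Working with $\alpha$ as a formal indeterminate makes the conditions uniform in $p$; for the borderline value $p=5$ extra resonances appear (for instance $y^4$ in the first component), which is exactly why the statement only asserts a list of necessary, not sufficient, conditions.

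Next I would solve the homological equations order by order, writing the conjugacy as $\Phi=\operatorname{Id}+\phi_2+\phi_3+\cdots$ and expanding $F\circ\Phi-\Phi\circ L$ by degree; the homological operator on $x^{m_1}y^{m_2}e_s$ has eigenvalue $\alpha^{m_1-m_2}-\lambda_s$, so these small divisors are the source of the powers of $\alpha$ in the denominators. At degree $2$ there is no resonance, so $\phi_2$ is chosen to annihilate all quadratic terms. Substituting and collecting degree-$3$ terms, the coefficients of the resonant monomials $x^2y$ and $xy^2$ cannot be removed; setting them to zero and clearing the $\alpha$-denominators yields $\mathcal{P}_1=0$ and $\mathcal{P}_2=0$. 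Note that $\mathcal{P}_2$ is the image of $\mathcal{P}_1$ under the involution $(x,y,\alpha)\mapsto(y,x,1/\alpha)$ with $f_{i,j}\leftrightarrow g_{j,i}$, which also swaps the two resonant directions, so a single computation suffices at this order. Pushing the same scheme through the (non-resonant) degree $4$ to fix $\phi_4$, and then to degree $5$, the coefficient of $x^3y^2$ in the normal form must vanish; after clearing denominators this is $\mathcal{P}_3=0$, its symmetric partner coming from $x^2y^3$ being equivalent under the same involution. The fact that $\mathcal{P}_1=\mathcal{P}_2=0$ already holds on the periodic locus ensures this degree-$5$ obstruction is well defined and agrees with the symbolic expression even at $p=5$.

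The hard part will be the degree-$5$ computation producing $\mathcal{P}_3$: it requires combining the contributions of $\phi_2,\phi_3,\phi_4$ with the quadratic, cubic and quartic parts of $F$, which generates a large number of terms and explains why $\mathcal{P}_3$ is relegated to Appendix~A. In practice this is best organized with computer algebra, expanding $F\circ\Phi-\Phi\circ L$ degree by degree and reading off the coefficients of the resonant monomials. Everything else---the formal linearizability, the resonance list, and the degree-$3$ conditions---is conceptually routine; the only real difficulty is the bookkeeping needed to reach order $5$ while keeping the dependence on $\alpha$ in a form that can be cleared to polynomial conditions.
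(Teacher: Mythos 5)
Your proposal is correct and follows essentially the same route as the paper: formal linearizability via Montgomery--Bochner, the resonance analysis for $L=\mathrm{diag}(\alpha,1/\alpha)$ singling out $x^2y$, $xy^2$ at order $3$ and $x^3y^2$, $x^2y^3$ at order $5$, and the identification of $\mathcal{P}_1,\mathcal{P}_2,\mathcal{P}_3$ as the numerators of the corresponding resonant coefficients $f_{2,1}^{(2)}$, $g_{1,2}^{(2)}$, $f_{3,2}^{(4)}$ after clearing the small divisors. Your observations on the involutive symmetry exchanging the two components and on the extra $p=5$ resonances ($y^4$ in the first component) are consistent with the paper's remarks following equation~\eqref{mcc}.
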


In fact, conditions $\mathcal{P}_1(F)=0$ and $\mathcal{P}_2(F)=0$ also work for
$p=4$.

 \begin{teo}\label{aplgene2} Consider a smooth  complex map of
the form \begin{equation}\label{Fdeteo2}  F(x,y)=\left(\alpha x\,+\sum_{i+j\geq 2}
f_{i,j}x^iy^j, y\,+\sum_{i+j\geq 2} g_{i,j}x^iy^j\right),\end{equation} where
$\alpha$ is a primitive $p$-root of unity. Then the following are necessary
conditions for $F$ to be $p$-periodic:
\begin{align*}
\mathcal{P}_1(F):=& f_{1,1}=0,\\
\mathcal{P}_2(F):=& g_{0,2}=0,\\
\mathcal{P}_3(F):=& f_{1,2} \alpha-2 f_{2,0} f_{0,2}+2 f_{0,2}  g_{1,1}-f_{1,2}=0,\\
\mathcal{P}_4(F):=& g_{0,3} \alpha-g_{0,3}-f_{0,2} g_{1,1}=0,\\
\mathcal{P}_5(F):=& f_{1,3} {\alpha}^{2}+ \left( -2 f_{2,0} f_{0,3}+3 f_{0,3}
g_{1,1}+2 g_{1,2} f_{0,2}-2 f_{2,1} f_{0,2}-2
f_{1,3} \right) \alpha+ f_{1,3}+2 f_{2,0} f_{0,3}\\
&+2 f_{2,1} f_{0,2}-4  g_{2,0} f^2_{0,2}-2 g_{1,2} f_{0,2}-3 f_{0,3}
g_{1,1}=0,
\\
\mathcal{P}_6(F):=& g_{0,4} {\alpha}^{2}- \left( f_{0,3} g_{1,1}+2 g_{0,4}+g_{1,2}
f_{0,2} \right) \alpha+ g_{2,0} f^2_{0,2}+f_{0,3} g_{1,1}+g_{0,4}+ g_{1,2}
f_{0,2}=0.
\end{align*}
\end{teo}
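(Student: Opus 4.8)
The plan is to run exactly the mechanism behind Proposition~\ref{lineal}. After a translation we may assume that the fixed point is the origin, so the linear part is $\Lambda=\mathrm{diag}(\alpha,1)$. If $F$ is $p$-periodic then the Montgomery--Bochner Theorem (recalled in Section~\ref{mmbb}) makes $F$ locally, hence formally, conjugate to $\Lambda$. Consequently the formal normal form of $F$, produced by the algorithm of Section~\ref{norderpc}, cannot contain any genuine nonlinear term: it must reduce to $\Lambda$. Thus \emph{every resonant monomial coefficient occurring in the normal form of $F$ is forced to vanish}, and the six conditions $\mathcal{P}_1(F)=\cdots=\mathcal{P}_6(F)=0$ will be precisely the vanishing of these resonant coefficients computed up to degree four.

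First I would identify the resonances. With eigenvalues $\lambda_1=\alpha$ and $\lambda_2=1$, a monomial $x^iy^j$ is resonant in the first component when $\alpha=\alpha^i$, i.e. $i\equiv1\ (\mathrm{mod}\ p)$, and in the second component when $1=\alpha^i$, i.e. $i\equiv0\ (\mathrm{mod}\ p)$. Hence for $p\ge5$ (more generally, whenever $x^p$ and $x^{1+p}$ exceed degree four) the only resonant monomials of degree $\le4$ are $xy,\ xy^2,\ xy^3$ in the first component and $y^2,\ y^3,\ y^4$ in the second one. These six monomials are matched, in order, by the six conditions: $\mathcal{P}_1$ kills the coefficient of $xy$ (namely $f_{1,1}$), $\mathcal{P}_2$ that of $y^2$ (namely $g_{0,2}$), $\mathcal{P}_3$ that of $xy^2$, $\mathcal{P}_4$ that of $y^3$, $\mathcal{P}_5$ that of $xy^3$, and $\mathcal{P}_6$ that of $y^4$. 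Every remaining monomial up to degree four is non-resonant, so its homological equation is solvable and the corresponding term can be removed by a near-identity change of variables $\Phi=\operatorname{Id}+\text{h.o.t.}$

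The core of the proof is then the explicit degree-by-degree normal form computation. At degree two no lower-order transformation intervenes, so the surviving resonant coefficients are the original ones, giving $\mathcal{P}_1=f_{1,1}$ and $\mathcal{P}_2=g_{0,2}$. At degree three one first solves the homological equation to fix the quadratic part of $\Phi$ (removing $f_{2,0}x^2,\ f_{0,2}y^2,\ g_{2,0}x^2,\ g_{1,1}xy$), then reads off the coefficients of $xy^2$ and $y^3$; since the quadratic part of $\Phi$ now feeds back through $\Phi^{-1}\circ F\circ\Phi$, these acquire the bilinear corrections $f_{2,0}f_{0,2},\ f_{0,2}g_{1,1}$ visible in $\mathcal{P}_3$ and $f_{0,2}g_{1,1}$ in $\mathcal{P}_4$. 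At degree four both the quadratic and cubic parts of $\Phi$ contribute, producing the quadratic-times-linear and cubic combinations (e.g. $g_{2,0}f_{0,2}^2$) seen in $\mathcal{P}_5$ and $\mathcal{P}_6$. The homological denominators that arise, of the form $\alpha^i-\alpha$ and $\alpha^i-1$, are nonzero for $p\ge5$; clearing them is what turns each resonant coefficient into the displayed polynomial in $\alpha$, and in particular explains the $(\alpha-1)$ factors in $\mathcal{P}_3$ and $\mathcal{P}_4$, coming from the degree-two denominator $1-\alpha$ used to remove $f_{0,2}y^2$.

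The main obstacle is purely the bookkeeping at degrees three and four: one must track how the lower-order pieces of the conjugacy $\Phi$ propagate into the higher-order resonant coefficients through the iterated substitutions in $\Phi^{-1}\circ F\circ\Phi$, making sure that all non-resonant contributions cancel and that, after clearing the homological denominators, exactly the polynomials $\mathcal{P}_3,\dots,\mathcal{P}_6$ remain. The conceptual content (Montgomery--Bochner plus the principle that formal linearizability forces all resonant normal-form coefficients to vanish) is already available from the preceding sections, so the work is the careful algebraic computation rather than any new idea.
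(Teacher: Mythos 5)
Your proposal follows essentially the same route as the paper: identify the resonant monomials for $L=\mathrm{diag}(\alpha,1)$ (namely $xy^{n-1}$ in the first component and $y^n$ in the second, which is exactly how the paper derives $f^{(n-1)}_{1,n-1}=g^{(n-1)}_{0,n}=0$), and then obtain $\mathcal{P}_1,\dots,\mathcal{P}_6$ as the explicit normal-form coefficients at orders $2$, $3$, $4$, with the same justification via Montgomery--Bochner that periodicity forces formal linearizability. The remaining work you defer (the propagation of the quadratic and cubic parts of the conjugacy into the higher resonant coefficients) is precisely the computation the paper also treats as a routine application of the Normal Form Algorithm, so there is no gap.
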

In this last case, and in contrast with the one treated in Theorem \ref{aplgene},
it is not difficult to obtain additional periodicity conditions. Two more
periodicity conditions are given in Appendix B.

The above results are applied in several contexts. The first
application is for polynomial maps. Periodic \emph{polynomial} maps
are notorious examples of invertible polynomial ones, which, in
turn, are the focus of many deep open problems like the Jacobian
conjecture, or the linearization conjecture. This second conjecture
says that if $F:\mathbb{C}^n\to\mathbb{C}^n$ is a
 $p$-periodic polynomial map,  then there exists a polynomial automorphism
 $\varphi$ (i.e. an invertible polynomial map with polynomial inverse)
 such that $\varphi\circ F\circ \varphi^{-1}$ is a linear map.
This conjecture is true for $n=2$ and as far as we know it is open
for $n\geq 3$, see \cite[Chaps. 8 and 9]{E} and \cite{Mau}.

In Section~\ref{polynomialmaps} we characterize the $p$-periodic
maps in a family of triangular maps, see Theorem~\ref{trian}, and we
give a simple and self-contained proof of the linearization
conjecture for this case. As an application of this result and
Theorem~\ref{aplgene}  we prove:

\begin{propo}\label{cascubic} Consider a complex polynomial map
\begin{equation}\label{primeraP} F(x,y)=\left(\alpha
x\,+\sum_{i+j=2}^3 f_{i,j}x^iy^j, y/\alpha\,+\sum_{i+j=2}^3
g_{i,j}x^iy^j\right),\end{equation}  The map is $p$-periodic if and
only if $\alpha$ is a primitive $p$-root of the unity, and  it holds
one of the following conditions
\begin{enumerate}[(i)]
\item $p=1$ and $F(x,y)=(x,y)$;
   \item $p=2,4$ and $F(x,y)=(\alpha x+f_{0,2}y^2,y/\alpha)$ or $F(x,y)=(\alpha
   x,y/\alpha+g_{2,0}x^2)$;
   \item $p=3$,  and $F(x,y)=(\alpha x+f_{0,3}y^3,y/\alpha)$ or $F(x,y)=(\alpha
   x,y/\alpha+g_{3,0}x^3)$;
  \item  $p\ge5$ and $F(x,y)=(\alpha x+f_{0,2}y^2+f_{0,3}y^3,y/\alpha)$ or $F(x,y)=(\alpha
   x,y/\alpha+g_{2,0}x^2+g_{3,0}x^3)$.
\end{enumerate}
\end{propo}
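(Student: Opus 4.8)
The statement is an equivalence, so I will prove the two implications separately, exploiting throughout the involution $(x,y,\alpha)\mapsto(y,x,1/\alpha)$, which preserves the family \eqref{primeraP} while interchanging the two coordinates and the roles of the $f_{i,j}$ and $g_{i,j}$; it carries the first alternative in each of (ii)--(iv) to the second, so it will suffice to analyse one alternative in each case. The sufficiency (``if'') direction is the easy one: every map listed in (i)--(iv) is triangular, so I would simply iterate it. Writing $y_k=y/\alpha^k$, the second coordinate returns to $y$ after $p$ steps since $\alpha^p=1$, while the first coordinate telescopes and each monomial $y^m$ in the forcing contributes the geometric factor $\sum_{k=0}^{p-1}\alpha^{-(m+1)k}$, which vanishes exactly when $p\nmid(m+1)$. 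This is precisely the arithmetic that admits $y^2$ when $p\neq 3$ and $y^3$ when $p\neq 2,4$, reproducing the monomials allowed in each case; that $p$ is the \emph{minimal} period, and not a proper divisor, then follows from Proposition~\ref{lineal}. Alternatively one simply quotes Theorem~\ref{trian}.

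For the necessity (``only if'') direction, Proposition~\ref{lineal} applied at the fixed point $\mathbf{0}$, where $DF(\mathbf{0})=\mathrm{diag}(\alpha,1/\alpha)$, forces $\alpha$ to be a primitive $p$-th root of unity, since otherwise the minimal $m$ with $(DF(\mathbf{0}))^m=\operatorname{Id}$ would be smaller than $p$; the case $p=1$ gives $\alpha=1$ and $DF(\mathbf{0})=\operatorname{Id}$, whence $F=\operatorname{Id}$ and (i). For $p\ge 5$ I would invoke Theorem~\ref{aplgene} with $\beta=1/\alpha$, so that periodicity requires $\mathcal{P}_1(F)=\mathcal{P}_2(F)=\mathcal{P}_3(F)=0$. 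The decisive observation is that each $\mathcal{P}_i$ is a polynomial in $\alpha$ of degree at most four whose coefficients are polynomials in the $f_{i,j},g_{i,j}$; since $\alpha$ is a root of the cyclotomic polynomial $\Phi_p$ of degree $\varphi(p)$, whenever $\varphi(p)>4$ the relation $\mathcal{P}_i(\alpha)=0$ forces \emph{every} $\alpha$-coefficient of $\mathcal{P}_i$ to vanish. Reading these off from $\mathcal{P}_1,\mathcal{P}_2$ (and $\mathcal{P}_3$) yields the eliminations $f_{2,1}=g_{1,2}=0$ together with the product relations $f_{1,1}f_{2,0}=f_{1,1}g_{1,1}=g_{0,2}g_{1,1}=g_{2,0}f_{0,2}=0$, which collapse the quadratic part and the resonant cubic part and drive the map toward triangular form $F(x,y)=(\alpha x+\phi(y),y/\alpha)$ (or, by the symmetry above, its counterpart).

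Once $F$ is triangular, Theorem~\ref{trian} identifies exactly which $\phi$ of degree at most three leave it genuinely $p$-periodic, recovering $y^2$ (when $p\neq 3$) and $y^3$ (when $p\neq 2,4$) and thereby producing (ii)--(iv). It is here that \emph{global} periodicity, as opposed to mere formal linearizability, is secured: a triangular map affine in its leading variable has iterates of bounded degree, whereas any surviving genuinely nonlinear term such as $f_{3,0}x^3$ would make $\deg F^n$ grow without bound and hence is impossible. The small periods $p=2,3,4$, which fall outside Theorem~\ref{aplgene}, I would dispatch directly: for $p=2,4$ the conditions $\mathcal{P}_1=\mathcal{P}_2=0$ still apply (per the remark following Theorem~\ref{aplgene}), and for $p=3$ the degree-two resonances together with a direct computation of $F^3$ suffice, as $p$ is small.

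The main obstacle is the gap opened by the exceptional periods $p\ge 5$ with $\varphi(p)\le 4$, namely $p\in\{5,6,8,10,12\}$, where the coefficient-matching argument fails because $\mathcal{P}_i$ may be a nonzero scalar multiple of $\Phi_p$. For these I would reduce each $\mathcal{P}_i$ modulo $\Phi_p$ to the basis $\{1,\alpha,\dots,\alpha^{\varphi(p)-1}\}$ and match coefficients there, quite possibly needing the full condition $\mathcal{P}_3$ of Appendix~A to kill resonances special to these values (for instance $y^4$ becomes resonant in the first component exactly when $p=5$). Equally delicate is showing that the \emph{non-resonant} cubic terms $f_{3,0},f_{1,2}$ and their symmetric partners, which are invisible to the conditions $\mathcal{P}_i$, are nonetheless forced to vanish; this is precisely where the degree-growth mechanism underlying Theorem~\ref{trian} is indispensable, and verifying that the resulting polynomial system admits no spurious ``mixed'' solutions beyond the two symmetric triangular families is the genuine computational heart of the argument.
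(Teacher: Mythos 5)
There is a genuine gap: your argument never forces the map to be triangular. The conditions $\mathcal{P}_1(F)=\mathcal{P}_2(F)=\mathcal{P}_3(F)=0$ of Theorem~\ref{aplgene} only constrain the resonant combinations of coefficients; they are blind to $f_{2,0}$, $f_{3,0}$, $f_{1,2}$, $g_{0,2}$, $g_{2,1}$, $g_{3,0}$, \dots{} taken individually. For instance $F(x,y)=(\alpha x+f_{2,0}x^2,\,y/\alpha)$ with $f_{2,0}\neq0$ satisfies all three conditions identically (every monomial of $\mathcal{P}_1,\mathcal{P}_2,\mathcal{P}_3$ that survives contains a factor that vanishes here), yet it is not periodic. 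You recognise this and propose to kill such terms by a degree-growth argument for iterates, but that is precisely the part you leave unproved, and it is delicate: finite-order polynomial automorphisms do have bounded degree growth, so one must actually show that the offending coefficients force unbounded growth, term by term, in the presence of possible cancellations. The paper's missing ingredient is Lemma~\ref{detconstant}(b): a periodic polynomial map is an automorphism, hence $\det(DF(\bx))$ is a nonzero constant. Imposing constancy of the Jacobian determinant on the general cubic map is an elementary polynomial computation that collapses the coefficient space into four explicit cases (two of which are already triangular), and only then are the normal form conditions applied to eliminate the remaining two. This is the step that does the work you are deferring to ``degree growth''.

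A second, smaller gap is your treatment of $p\ge5$. Your coefficient-matching argument (all $\alpha$-coefficients of $\mathcal{P}_i$ vanish because $\deg\Phi_p>4$) fails exactly for $p\in\{5,6,8,10,12\}$, and you only sketch a repair. The paper needs no case split on $p$: in its Case~I the conditions reduce to $P_8(\alpha)=0$ for an explicit reciprocal polynomial $P_8$ with no roots on the unit circle, which excludes every root of unity at once; in Case~III a single condition gives $g_{0,2}=0$ directly. Your sufficiency direction and your use of the symmetry $(x,y,\alpha)\mapsto(y,x,1/\alpha)$ are fine and match Corollary~\ref{ccc} and the paper's remark that Case~IV is symmetric to Case~II.
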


Similarly, as an application of Theorem~\ref{aplgene2}, we prove:

\begin{propo}\label{propoultima}
The only $p$-periodic cases in the family of complex maps
$$
F(x,y)=\left(\frac{\alpha x+b{x}^{2}+cxy+d{y}^{2}}{1+m \left(
{x}^{2} +{y}^{2} \right)},\frac {y+r{x}^{2}+sxy+t{y}^{2}}{1+m \left(
{x}^{2 }+{y}^{2} \right)}\right),
$$ are, either  $F(x,y)=(x,y)$ when $\alpha=1$, or the ones given the polynomial maps
$ F(x,y)=(\alpha x+dy^2,y)$ or $F(x,y)=(\alpha x,y+rx^2)$ when
$\alpha$ a primitive $p$-root of the unity with $p>1$ and $d$ and
$r$ arbitrary complex numbers.
\end{propo}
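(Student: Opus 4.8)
The plan is to apply Theorem~\ref{aplgene2} directly to the given rational family, which requires first identifying the family as a map of the form \eqref{Fdeteo2}. The map has a fixed point at the origin, and its linear part there is $\mathrm{diag}(\alpha,1)$, so I would begin by expanding each component as a power series around $(0,0)$. Since the denominator $1+m(x^2+y^2)$ equals $1$ plus a quadratic term, the expansion $1/(1+m(x^2+y^2))=1-m(x^2+y^2)+\cdots$ introduces no linear corrections, so the linear part is indeed $\mathrm{diag}(\alpha,1)$ and the quadratic and cubic Taylor coefficients $f_{i,j}$, $g_{i,j}$ are explicit polynomials in the parameters $b,c,d,m,r,s,t$. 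I would record the needed coefficients: at quadratic order $f_{2,0}=b$, $f_{1,1}=c$, $f_{0,2}=d$, $g_{2,0}=r$, $g_{1,1}=s$, $g_{0,2}=t$, and at cubic order the coefficients coming from multiplying the quadratic numerators by $-m(x^2+y^2)$, for instance $f_{0,3}=-\alpha m\cdot 0 + \dots$ — here one must be careful, since the cubic terms of the first component come from $-\alpha m\, x(x^2+y^2)$ together with the lower-order numerator times the denominator expansion.

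Once the Taylor coefficients are in hand, I would impose the necessary conditions $\mathcal{P}_1(F)=\cdots=\mathcal{P}_6(F)=0$ from Theorem~\ref{aplgene2}, plus the two additional conditions from Appendix~B, and solve the resulting polynomial system in the parameters. The first two conditions $f_{1,1}=0$ and $g_{0,2}=0$ force $c=0$ and $t=0$ immediately. Substituting these, the remaining conditions $\mathcal{P}_3,\dots,\mathcal{P}_6$ (and the Appendix~B ones) become polynomial relations among the surviving parameters $b,d,m,r,s$ and $\alpha$; here I would use that $\alpha$ is a primitive $p$-root of unity with $p>1$, so $\alpha\neq 1$, which lets me divide by factors of $(\alpha-1)$ when they appear and thereby simplify. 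The expectation is that these relations collapse the parameter space drastically, forcing $m=0$ together with enough vanishing among $b,d,r,s$ so that only the two asserted polynomial normal forms survive.

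The main obstacle I anticipate is twofold. First, the necessary conditions from Theorem~\ref{aplgene2} and Appendix~B are only \emph{necessary}, so after solving the system I must verify sufficiency: for each surviving candidate map I would check directly that it is genuinely $p$-periodic. This is where the structure of the answer helps, because the surviving candidates $F(x,y)=(\alpha x+dy^2,y)$ and $F(x,y)=(\alpha x,y+rx^2)$ are polynomial \emph{triangular} maps whose periodicity can be confirmed by the results of Section~\ref{polynomialmaps}, in particular Theorem~\ref{trian}, or by a short direct iteration argument using that $\alpha^p=1$. Second, the algebra of eliminating parameters from six-plus polynomial conditions is genuinely heavy; the delicate point is to ensure that no spurious branch is lost or gained, especially the degenerate branch $\alpha=1$ which yields the identity $F(x,y)=(x,y)$ and must be separated out at the start.

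I would therefore organize the proof as: (1) verify the origin is fixed with linear part $\mathrm{diag}(\alpha,1)$ and compute the low-order Taylor coefficients; (2) apply Theorem~\ref{aplgene2} and the Appendix~B conditions to obtain the polynomial system; (3) solve the system, treating the cases $\alpha=1$ and $\alpha\neq 1$ separately, to reduce to the two candidate polynomial maps; and (4) confirm that these candidates are indeed $p$-periodic, invoking the triangular-map results. The genuine mathematical content is concentrated in step (3), while step (4) is where the necessary conditions are promoted to a full characterization.
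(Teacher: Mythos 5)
Your steps (1), (2) and (4) match the paper's proof, but step (3) contains a genuine gap: the expectation that the eight necessary conditions ``collapse the parameter space'' down to the two asserted polynomial normal forms is false. After imposing $c=t=0$, $m=ds/(1-\alpha)$, $b=(\alpha+2)s/2$ and $r=0$, the Appendix~B conditions reduce to $\mathcal{P}_7(F)=-d^2s(\alpha-2)(\alpha s-2d)/2$ and $\mathcal{P}_8(F)=-d^2s(\alpha s-2d)/2$, and these admit the solution $d=\alpha s/2$ with $s\neq0$. This yields a one-parameter family of genuinely rational maps (with $m=\alpha s^2/(2(1-\alpha))\neq 0$) that satisfies \emph{every} normal-form periodicity condition at the origin and yet is not periodic. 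Your plan has no mechanism to exclude it. The paper disposes of this branch with a different idea: that map has, besides a continuum of fixed points through the origin, an isolated fixed point $\bx_0=((1-\alpha)/s,0)$, and periodicity would force $|\det(DF(\bx_0))|=1$, whereas a direct computation gives $|\det(DF(\bx_0))|=4/|\alpha+1|^2\neq1$ since $|\alpha|=1$ and $\alpha\neq1$. Some such argument at a \emph{second} fixed point (or another obstruction entirely) is indispensable; more normal-form conditions at the origin will not do it, precisely because this family is formally linearizable there to the orders computed.

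Two smaller points. In the branch $d=0$ (hence $m=0$), the paper does not grind further normal-form conditions but invokes Lemma~\ref{detconstant}: the map is polynomial, so periodicity forces $\det(DF)\in\C^*$, which immediately gives $b=s=0$; you should either adopt this shortcut or verify that your conditions actually force it. Finally, the Appendix~B conditions require $\alpha^3\neq1$, so the cases $p\le3$ must be handled separately (the paper does so via Corollary~\ref{pbaix}); your case split only separates $\alpha=1$.
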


In all the rest of  examples, given in Sections \ref{seceed} and \ref{r3}, the
maps are the ones associated to some recurrences. Recall that given a recurrence,
autonomous or not,  it is said that it is {\it globally $p$-periodic} if for all
initial conditions for which the sequence is well-defined it gives rise to a
$p$-periodic sequence and $p$ is the smallest positive integer number with this
property. We will face this question studying an associated map $F$. With this
point of view, the recurrence will be globally periodic if and only if the map $F$
is periodic.

The study of the global periodicity in difference equations is nowadays the
subject of an active research, see for instance
\cite{Abu1,Abu,BL06,BL07,BLiz,BS,CL09,CL10,CGTopa,CGTopa3,CL,GL,M,R,RM}, and
references therein and several techniques have been used to approach the problem.
To the best of our knowledge, this is the first time that the Normal Form Theory
is used in this setting. As a second application of Theorem \ref{aplgene}, we
classify the globally periodic second order Lyness recurrences, reobtaining the
results in \cite{CL} for this case:

\begin{propo}\label{lynesnormal}
The only globally periodic Lyness recurrences
$x_{n+2}=\dps{\frac{a+x_{n+1}}{x_n}}$ with $a\in\C$, are the $5$-periodic case
with $a=1$; and the $6$-periodic case with $a=0$.
\end{propo}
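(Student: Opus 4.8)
The plan is to realize the Lyness recurrence $x_{n+2}=\dps{\frac{a+x_{n+1}}{x_n}}$ as the planar map $F(x,y)=\left(y,\dps{\frac{a+y}{x}}\right)$, so that global $p$-periodicity of the recurrence is equivalent to $p$-periodicity of $F$. First I would locate the fixed points: solving $F(x,y)=(x,y)$ forces $x=y$ and $x^2=a+x$, giving $x_0=\dps{\frac{1+\sqrt{1+4a}}{2}}$ (and its conjugate). By Proposition~\ref{lineal}, if $F$ is $p$-periodic then $DF(\bx_0)^p=\operatorname{Id}$ with $p$ minimal, so the eigenvalues of the Jacobian at $\bx_0$ must be roots of unity. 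A direct computation of $DF(x_0,y_0)=\left(\begin{smallmatrix}0&1\\-\frac{a+y_0}{x_0^2}&\frac{1}{x_0}\end{smallmatrix}\right)=\left(\begin{smallmatrix}0&1\\-1/x_0&1/x_0\end{smallmatrix}\right)$ has determinant $1$ and trace $1/x_0$, so its eigenvalues are $\alpha,1/\alpha$ with $\alpha+1/\alpha=1/x_0$; requiring $\alpha$ to be a primitive $p$-root of unity already restricts the admissible traces and hence the admissible values of $a$ to a finite list.

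Next I would put $F$ into the form covered by Theorem~\ref{aplgene}. Since the linear part at $\bx_0$ has eigenvalues $\alpha,1/\alpha$ with $\alpha\beta=1$, I would diagonalize $DF(\bx_0)$ by a linear change of coordinates centered at $\bx_0$, bringing $F$ to the shape $(X,Y)\mapsto\left(\alpha X+\sum_{i+j\ge2}f_{i,j}X^iY^j,\ \frac{1}{\alpha}Y+\sum_{i+j\ge2}g_{i,j}X^iY^j\right)$ of equation~\eqref{Fdeteo1}. For $p\ge5$ Theorem~\ref{aplgene} then supplies the necessary conditions $\mathcal{P}_1(F)=\mathcal{P}_2(F)=\mathcal{P}_3(F)=0$, while the remark following the theorem says $\mathcal{P}_1=\mathcal{P}_2=0$ also handle $p=4$; the small cases $p=1,2,3$ I would dispose of by hand, checking $F^p=\operatorname{Id}$ directly on the (very short) list of candidate traces. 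The key computational step is to extract the Taylor coefficients $f_{i,j},g_{i,j}$ of the conjugated map as rational functions of $\alpha$ (using $x_0=1/(\alpha+1/\alpha)$ to eliminate $a$), and then to substitute them into $\mathcal{P}_1,\mathcal{P}_2,\mathcal{P}_3$.

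The outcome I expect is that $\mathcal{P}_1=\mathcal{P}_2=\mathcal{P}_3=0$ collapse, after using $\alpha^p=1$, to polynomial constraints in $\alpha$ whose only solutions among roots of unity are the ones giving $a=1$ (with $\alpha$ a primitive fifth root, $p=5$) and $a=0$ (with $p=6$). For $a=0$ the recurrence degenerates to $x_{n+2}=x_{n+1}/x_n$, which one checks by hand is $6$-periodic; for $a=1$ the classical Lyness map is $5$-periodic, again a direct verification. This matches the rational parametrization philosophy advertised in the introduction: I would parametrize so that $\alpha$ (equivalently the trace, equivalently $a$) is the single free rational parameter, turning the periodicity question into finitely many polynomial equations in $\alpha$.

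The main obstacle will be the necessity-to-sufficiency gap. Theorem~\ref{aplgene} gives only \emph{necessary} conditions, so after the normal-form computation isolates the finite candidate set $\{a=0,a=1\}$ (plus the trivial $a$ giving $\alpha=1$, $p=1$, where $\bx_0$ may escape to infinity and must be treated separately), I still must verify by explicit iteration that $F$ genuinely is $p$-periodic in exactly these cases and is not periodic for any other admissible $\alpha$. Equivalently I must rule out the possibility that the vanishing of finitely many normal-form obstructions is satisfied by some root of unity that does \emph{not} give a periodic map; I expect this to require either pushing the normal-form computation one order further or invoking a direct finite check, and reconciling the answer with the known classification in~\cite{CL}.
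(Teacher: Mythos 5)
Your proposal is correct and follows essentially the same route as the paper: reduce to the associated planar map, dispose of $p\le 4$ directly, parametrize $a$ rationally by the eigenvalue $\lambda=\alpha$ at the fixed point (your $x_0=1/(\alpha+1/\alpha)$ is exactly the paper's $x_0=\lambda/(1+\lambda^2)$, $a=-\lambda(\lambda^2-\lambda+1)/(1+\lambda^2)^2$), diagonalize, and apply Theorem~\ref{aplgene}. The only difference is one of economy: the paper finds that the single condition $\mathcal{P}_1(F_\lambda)=0$ already factors as $(\lambda^2-\lambda+1)(\lambda^4+\lambda^3+\lambda^2+\lambda+1)(\lambda^2+1)^3=0$, whose admissible roots are precisely the primitive $6$th and $5$th roots of unity giving $a=0$ and $a=1$, so the sufficiency gap you flag is closed by the classical direct verification of those two cases without needing $\mathcal{P}_2$ or $\mathcal{P}_3$.
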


Also as a direct consequence of Theorem \ref{aplgene} we get next
result for some Gumovski-Mira-type recurrences~\cite{GM},
\begin{propo}\label{gummir}
There are no globally periodic cases in the family of Gumovski-Mira
recurrences
$$x_{n+2}=-x_{n}+\frac{ x_{n+1}}{b+x_{n+1}^2},\quad b\in\C.$$
\end{propo}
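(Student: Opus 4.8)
The plan is to reduce Proposition~\ref{gummir} to an application of Theorem~\ref{aplgene}. First I would write the Gumovski-Mira recurrence $x_{n+2}=-x_n+x_{n+1}/(b+x_{n+1}^2)$ as a planar map by setting $F(x,y)=\left(y,\,-x+y/(b+y^2)\right)$, so that global $p$-periodicity of the recurrence is equivalent to $p$-periodicity of $F$. The recurrence being globally periodic forces every orbit to be periodic, and in particular $F$ must be a periodic diffeomorphism near any fixed point; so the first step is to locate the fixed points and compute the Jacobian there. Solving $F(x,y)=(x,y)$ gives $x=y$ and $2x=x/(b+x^2)$, which (discarding the trivial $x=0$ case, or treating it separately) pins down the fixed point(s) explicitly in terms of $b$, and a direct computation yields the eigenvalues $\alpha,1/\alpha$ of $DF$ at such a point, consistent with the $\det=1$ structure required to bring $F$ into the form~\eqref{Fdeteo1}.

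Next I would linearly change coordinates to diagonalize the linear part, putting $F$ into the normal form~\eqref{Fdeteo1} with $\alpha$ a primitive $p$-root of unity. The key step is then to extract the coefficients $f_{i,j},g_{i,j}$ of the quadratic and cubic terms in this diagonalizing coordinate system and substitute them into the obstruction polynomials $\mathcal{P}_1(F),\mathcal{P}_2(F),\mathcal{P}_3(F)$ of Theorem~\ref{aplgene}. For $p\ge5$ the theorem tells us that $p$-periodicity forces $\mathcal{P}_1=\mathcal{P}_2=\mathcal{P}_3=0$; I expect to show that for this particular one-parameter family these equations are \emph{incompatible} for every admissible value of $\alpha$ and $b$, thereby ruling out all $p\ge5$. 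The cases $p=2,3,4$ would be handled separately and directly: since these involve only finitely many values of $\alpha$, one can check the finitely many candidate maps by explicit composition (using $\mathcal{P}_1=\mathcal{P}_2=0$ which, as noted after Theorem~\ref{aplgene}, also cover $p=4$, together with Proposition~\ref{lineal} to bound the period once $(DF)^m=\operatorname{Id}$).

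The main obstacle I anticipate is purely computational bookkeeping: obtaining clean expressions for $f_{i,j},g_{i,j}$ after diagonalization, because the nonlinearity $y/(b+y^2)$ must be Taylor-expanded around the nonzero fixed point and the resulting coefficients depend on both $b$ and the eigenvalue $\alpha$. Once those are in hand, verifying $\mathcal{P}_1=\mathcal{P}_2=\mathcal{P}_3=0$ has no solution is an elimination problem in the variables $\alpha$ and $b$ (with the constraint that $\alpha$ is a root of unity), which should collapse to showing that a resultant or a small polynomial system has no admissible roots. If the diagonalizing transformation is chosen carefully, many of the cross coefficients should vanish by the symmetry of the shift-type map, simplifying the $\mathcal{P}_i$ considerably and making the contradiction transparent; the real work is confirming that no value of $b$ sneaks a periodic case through.
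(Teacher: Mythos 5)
Your outline is sound and follows essentially the same route as the paper: pass to the planar map $G_b(x,y)=(y,-x+y/(b+y^2))$, linearize at a fixed point, diagonalize, read off obstructions from Theorem~\ref{aplgene}, and dispatch $p\le4$ by direct computation. The one substantive divergence is your choice of base point. You discard $x=0$ as ``trivial'' and propose to expand around the nonzero fixed point $x_0$ with $x_0^2=\tfrac12-b$; the paper does the opposite: for $b\ne0$ the origin \emph{is} a fixed point, the expansion of $y/(b+y^2)$ there is simply $y/b-y^3/b^2+\cdots$, and the rational parametrization $b=\lambda/(1+\lambda^2)$ makes the eigenvalues exactly $\lambda$ and $1/\lambda$. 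With that choice the single condition $\mathcal{P}_2(F_\lambda)=0$ already factors as $(\lambda^2+1)^2(\lambda^2+\lambda+1)=0$, whose only admissible roots are primitive cube roots of unity, and these are killed by the $p\le4$ check via Corollary~\ref{pbaix}; no resultant of $\mathcal{P}_1,\mathcal{P}_2,\mathcal{P}_3$ is needed. If you insist on the nonzero fixed point, two cases must not be skipped: (i) $b=0$, where the origin is not in the domain and at $(\sqrt2/2,\sqrt2/2)$ the Jacobian has double eigenvalue $-1$ and is not diagonalizable, so no power of it is the identity and Theorem~\ref{aplgene} does not apply (this is how the paper rules out $b=0$); and (ii) degenerate parameter values such as $b=\tfrac12$, where your fixed point collapses to the origin and the Jacobian is again non-diagonalizable. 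These require the elementary $(DF(\bx_0))^p\ne\operatorname{Id}$ argument rather than the normal-form machinery, but are easy; with them included your plan goes through, at the cost of heavier bookkeeping than the paper's.
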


One of the main applications in this setting  concerns the $2$-periodic Lyness
recurrence
\begin{equation}\label{eq}
x_{n+2}\,=\,\frac{a_n+x_{n+1}}{x_n},\, \quad\mbox{ where }\quad
a_n\,=\,\left\{\begin{array}{lllr} a&\mbox{for}&n=2\ell+1,\\
b&\mbox{for}&\,n=2\ell,
\end{array}\right.\end{equation}
and $a,b\in\C$. In Section \ref{secGPFba} we solve the global
periodicity problem for it by studying the family of maps
\begin{equation*}
F_{b,a}(x,y)=\left(\frac {a+y}{x},\frac {a+b x+ y}{x y}\right),
\end{equation*}
which as we will see describes the behavior  of~\eqref{eq}.

\begin{teo}\label{teolyness2p}
The only globally periodic recurrences in~\eqref{eq} are:
\begin{enumerate}[(i)]
  \item The cases  $a=b=0$ ($6$-periodic) and  $a=b=1$ ($5$-periodic).
 \item The cases $a=(-1\pm i\,\sqrt{3})/{2}$ and  $b=\overline {a}=1/a$, $10$-periodic.
   \end{enumerate}
\end{teo}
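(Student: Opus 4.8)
The plan is to reduce the periodicity problem for the non-autonomous recurrence~\eqref{eq} to the periodicity of the single map $F_{b,a}(x,y)=\left(\frac{a+y}{x},\frac{a+bx+y}{xy}\right)$, and then attack the latter by the Normal Form machinery of Theorem~\ref{aplgene}. First I would verify that $F_{b,a}$ genuinely encodes~\eqref{eq}: since $a_n$ alternates with period two, the composition of two consecutive steps of the recurrence is itself autonomous, and one checks directly that iterating $F_{b,a}$ tracks consecutive pairs $(x_n,x_{n+1})$ of the sequence. The key observation is that the recurrence is globally $p$-periodic precisely when $F_{b,a}$ is periodic, with the period of the sequence being (generically) twice the period of the map — so I must keep careful track of the factor of two when translating between the two notions.

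\emph{Second}, I would locate the fixed points of $F_{b,a}$ and compute the linear part there. Solving $F_{b,a}(x,y)=(x,y)$ gives a system whose solutions depend rationally (or through a controlled algebraic extension) on $a$ and $b$; this is exactly the setting where the rational parametrization of parameters advocated in the introduction pays off. At a fixed point $\mathbf{x}_0$ the eigenvalues of $DF_{b,a}(\mathbf{x}_0)$ determine the candidate periods: by Proposition~\ref{lineal}, if $F_{b,a}$ is $p$-periodic then $(DF_{b,a}(\mathbf{x}_0))^p=\operatorname{Id}$ and $p$ is the smallest such integer, so the eigenvalues must be primitive $p$-th roots of unity. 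Because $F_{b,a}$ is area-related (its Jacobian structure forces the product of eigenvalues to be controlled), the eigenvalues come in a reciprocal pair $\alpha,1/\alpha$, which places us squarely in the hypothesis $\mathrm{diag}(\alpha,\beta)$ with $\alpha\beta=1$ of the planar results, and hence of Theorem~\ref{aplgene}.

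\emph{Third}, having diagonalized the linear part, I would apply Theorem~\ref{aplgene}: the necessary conditions $\mathcal{P}_1(F)=\mathcal{P}_2(F)=\mathcal{P}_3(F)=0$, expressed through the Taylor coefficients $f_{i,j},g_{i,j}$ of the conjugated map, become polynomial equations in $a$, $b$, and $\alpha$ once we substitute the explicit expansion of $F_{b,a}$ around $\mathbf{x}_0$. Imposing that $\alpha$ be a primitive $p$-th root of unity (so $\alpha$ satisfies a cyclotomic relation) and eliminating turns the whole question into a finite computation: for each admissible small $p$ one solves the resulting algebraic system for $(a,b)$, and for large $p$ one argues the system is inconsistent. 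I expect this elimination to pin down exactly the candidate parameter values, namely $a=b=0$, $a=b=1$, and the pair $a=(-1\pm i\sqrt{3})/2$ with $b=\bar a=1/a$.

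\emph{The main obstacle} will be the last step: the Normal Form conditions only supply \emph{necessary} conditions for periodicity, so after the elimination produces the finite candidate list I must still certify sufficiency — that each surviving $(a,b)$ really gives a globally periodic recurrence with the claimed period, and that no period has been overlooked. For the two degenerate cases $a=b\in\{0,1\}$ this reduces to the already-known autonomous Lyness results ($6$- and $5$-periodicity, cf.\ Proposition~\ref{lynesnormal}), which I can cite directly. The genuinely new case $b=\bar a=1/a$ with $a$ a primitive sixth root of unity requires an honest verification that $F_{b,a}^{5}=\operatorname{Id}$ (giving a $10$-periodic sequence after the factor-of-two translation); I would do this either by direct symbolic composition or, more conceptually, by exhibiting an explicit linearizing conjugacy guaranteed by the Montgomery--Bochner theorem, thereby confirming both the periodicity and that $p=10$ is minimal.
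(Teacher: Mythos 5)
Your plan follows the paper's proof essentially step for step: reduce to the composed map, introduce a rational parametrization of $(a,b)$ so that the fixed point and the reciprocal eigenvalue pair $\lambda,1/\lambda$ depend rationally on the new parameters, impose the necessary conditions $\mathcal{P}_1=\mathcal{P}_2=\mathcal{P}_3=0$ of Theorem~\ref{aplgene}, eliminate (the paper does this uniformly in $p$ via a resultant in the auxiliary parameter $B$ followed by a cyclotomic-divisibility check that leaves only $p=5$), and finally certify the surviving candidates by directly computing $G_{b,a}^5$. The only slip is cosmetic: in the $10$-periodic case $a=(-1\pm i\,\sqrt{3})/2$ is a primitive \emph{third} root of unity (a root of $a^2+a+1$), not a sixth root.
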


Notice that the   cases given in (i)  correspond to the  well-known autonomous
globally periodic Lyness recurrences also appearing in
Proposition~\ref{lynesnormal}.

Finally, to show an application in $\K^3$  we find the globally
periodic third order Lyness recurrences, reobtaining again the
result in \cite{CL}:

\begin{propo}\label{lynes3d}
The only globally periodic third-order Lyness recurrence
$$x_{n+3}=\dps{\frac{a+x_{n+1}+x_{n+2}}{x_n}},\quad a\in\C,$$  corresponds to $a=1$
and is  $8$-periodic.
\end{propo}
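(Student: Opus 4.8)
The plan is to convert the third-order scalar recurrence into a map on $\K^3$ and then apply the periodicity framework developed earlier, particularly Theorem~\ref{aplgene}, after reducing to a two-dimensional normal-form computation at a fixed point. First I would write the recurrence $x_{n+3}=(a+x_{n+1}+x_{n+2})/x_n$ as the map
\[
F(x,y,z)=\left(y,\,z,\,\frac{a+y+z}{x}\right)
\]
on $\K^3$, so that global $p$-periodicity of the recurrence is equivalent to $F$ being $p$-periodic. The recurrence is globally periodic if and only if this $F$ is periodic, exactly as explained in the excerpt for the other Lyness cases.

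Next I would locate the relevant fixed point $\bx_0$ of $F$. Solving $x=y=z=(a+2x)/x$ gives $x^2-2x-a=0$, so the fixed point has all three coordinates equal to a common value $x_*$ satisfying $x_*^2=2x_*+a$. Then I would compute $DF(\bx_0)$ and its characteristic polynomial. By Proposition~\ref{lineal}, a necessary condition for $F$ to be $p$-periodic is $(DF(\bx_0))^p=\operatorname{Id}$, which forces all eigenvalues of $DF(\bx_0)$ to be roots of unity lying on the unit circle and, moreover, forces $DF(\bx_0)$ to be diagonalizable with $p$ the least common order of its eigenvalues. Imposing that the characteristic polynomial of the $3\times3$ companion-type matrix $DF(\bx_0)$ has all roots on the unit circle (and is diagonalizable) should already pin $a$ down to a finite list of candidate values, each determining a specific $p$.

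The heart of the argument is then to rule out all candidates except the expected one. The key idea is to reduce to the planar situation: since $F$ is a rational diffeomorphism of $\K^3$ with a fixed point whose linear part is diagonalizable with eigenvalues that are roots of unity, one eigenvalue must be $1$ (the product of the three eigenvalues equals $-\det$ of the companion part, which one checks equals $\pm1$) and the other two are a reciprocal pair $\alpha,1/\alpha$ on the unit circle. Restricting to a center manifold or to the invariant subspace associated with the reciprocal pair $\{\alpha,1/\alpha\}$, and putting $F$ into the coordinates of \eqref{Fdeteo1}, I would apply Theorem~\ref{aplgene} to extract the necessary conditions $\mathcal{P}_1(F)=\mathcal{P}_2(F)=\mathcal{P}_3(F)=0$ for each candidate $p\ge5$; these polynomial identities in $a$ should eliminate every spurious candidate, leaving only $a=1$ with $\alpha$ a primitive root of unity of the order that yields $p=8$.

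The main obstacle I anticipate is twofold. First, the reduction from the three-dimensional map to the two-dimensional normal form requires care: one must verify that $DF(\bx_0)$ indeed has $1$ as an eigenvalue and a reciprocal pair as the others, and that the restriction to the appropriate invariant coordinates brings $F$ genuinely into the form~\eqref{Fdeteo1} with $\alpha$ the relevant root of unity, so that Theorem~\ref{aplgene} applies cleanly. Second, once a finite candidate list for $a$ is produced, confirming that $a=1$ gives an honestly $8$-periodic recurrence (and not merely one satisfying the necessary linearization conditions) requires checking $F^8=\operatorname{Id}$ directly; by Proposition~\ref{lineal} the minimal period is exactly the order of the eigenvalue structure, so this final verification should be a finite, if tedious, computation. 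The expected conclusion is that $a=1$ is the unique globally periodic case and its period is $8$.
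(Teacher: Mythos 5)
Your setup (passing to the map $G_a(x,y,z)=(y,z,(a+y+z)/x)$, locating the fixed point $(x_0,x_0,x_0)$ with $x_0^2-2x_0-a=0$, and analysing the spectrum of $DG_a$ there) matches the paper, but the core of your argument has two genuine gaps.

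First, the claim that requiring all eigenvalues of $DG_a(\bx_0)$ to be roots of unity ``should already pin $a$ down to a finite list'' is false, and this is precisely the difficulty the whole paper is built to overcome. The characteristic polynomial factors as $-(\lambda+1)\left(\lambda^2-(1+1/x_0)\lambda+1\right)$, so one eigenvalue is $-1$ (not $1$: the determinant is $-1$ and the other two eigenvalues multiply to $1$), and the remaining pair is $\lambda,1/\lambda$. For \emph{every} root of unity $\lambda$ there is a corresponding value of $a$ making the linear part periodic, so the linear-level necessary condition leaves a countably infinite family of candidates. A finite list only emerges after imposing a nonlinear obstruction; in the paper this is the third-order resonance condition $g_{2,1,0}^{(2)}=0$ of Proposition~\ref{propocondr3}, which, under the rational parametrization $a=-\lambda(2\lambda^2-3\lambda+2)/(\lambda^2-\lambda+1)^2$, becomes $(\lambda^2-\lambda+1)^3(\lambda^4+1)=0$ and forces $\lambda$ to be a primitive $8$th root of unity, hence $a=1$.

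Second, your proposed route to that nonlinear obstruction --- restricting to ``a center manifold or to the invariant subspace associated with the reciprocal pair $\{\alpha,1/\alpha\}$'' and then applying the planar Theorem~\ref{aplgene} --- is not justified. All three eigenvalues lie on the unit circle, so the center manifold is the whole space, and there is no reason for the nonlinear map to leave invariant a two-dimensional manifold tangent to the $\{\lambda,1/\lambda\}$ eigenspace (constructing one is essentially equivalent to the linearization problem you are trying to obstruct). Moreover, the obstruction the paper actually uses lives on the monomial $x^2y$ in the second component, i.e.\ it comes from the mixed resonance $\alpha^2\beta-\beta=0$ with $\alpha=-1$; it genuinely involves the $-1$ eigendirection and would be invisible to any computation confined to the $(\beta,\gamma)$-plane. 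The correct fix is to run the normal form algorithm in three variables for the eigenvalue pattern $(\pm1,\beta,1/\beta)$, which is exactly what Proposition~\ref{propocondr3} provides. Your final step (verifying directly that $a=1$ gives an $8$-periodic map) is fine.
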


\section{Some consequences of the Montgomery-Bochner Theorem}\label{mmbb}

The next version of Montgomery-Bochner Theorem is a simplified one,
adapted to our interests. The general one applies in a much more
general context, see~\cite{MZ}.

\begin{teo}[{\bf  Montgomery-Bochner}]
Let $F:\mathcal{U}\to\mathcal{U}$  be a  $p$-periodic
$\mathcal{C}^1$-diffeo\-mor\-phism, where $U$ is an open set  of $\K^k$. Let
$\bx_0\in\mathcal{U}$ be  a fixed point of $F$. Then, there exists a neighbourhood
of $\bx_0$ where $F$ is conjugated with the linear map $L(\bx)=DF(\bx_0)\bx$.
Moreover  the linearization is given by the local diffeomorphism
  $$
\psi(\bx)=\frac{1}{p}\sum_{i=0}^{p-1}
(DF(\bx_0))^{-i}\left(F^i(\bx)\right).
  $$
\end{teo}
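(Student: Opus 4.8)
The plan is to verify directly that the explicitly proposed map $\psi$ conjugates $F$ to its linear part $L$, i.e. that $\psi \circ F = L \circ \psi$, and then to check that $\psi$ is a local diffeomorphism near $\bx_0$. Writing $M := DF(\bx_0)$ for brevity, the key algebraic identity to establish is
\begin{equation*}
\psi(F(\bx)) = \frac{1}{p}\sum_{i=0}^{p-1} M^{-i} F^{i}(F(\bx)) = \frac{1}{p}\sum_{i=0}^{p-1} M^{-i} F^{i+1}(\bx).
\end{equation*}
Shifting the summation index $j = i+1$ and multiplying through by $M^{-1}$ on the left, I would rewrite this as $M^{-1}\psi(F(\bx)) = \frac{1}{p}\sum_{j=1}^{p} M^{-j} F^{j}(\bx)$. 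The crucial point is that the $j=p$ term equals $M^{-p}F^{p}(\bx) = M^{-p}\,\bx$, while the missing $j=0$ term would be $M^{0}F^{0}(\bx) = \bx$. Since $F$ is $p$-periodic, Proposition~\ref{lineal} (applicable because $\bx_0$ is a fixed point) gives $M^{p} = (DF(\bx_0))^{p} = \operatorname{Id}$, hence $M^{-p} = \operatorname{Id}$ and the two boundary terms coincide. Therefore the shifted sum telescopes back to the original one, yielding $M^{-1}\psi(F(\bx)) = \psi(\bx)$, that is, $\psi \circ F = M \circ \psi = L \circ \psi$, which is exactly the conjugacy sought.

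Next I would compute the differential of $\psi$ at the fixed point to show $\psi$ is a local diffeomorphism there. Since $\bx_0$ is fixed, each iterate satisfies $F^{i}(\bx_0)=\bx_0$ and, by the chain rule, $DF^{i}(\bx_0) = M^{i}$. Differentiating the defining series term by term gives
\begin{equation*}
D\psi(\bx_0) = \frac{1}{p}\sum_{i=0}^{p-1} M^{-i}\, DF^{i}(\bx_0) = \frac{1}{p}\sum_{i=0}^{p-1} M^{-i} M^{i} = \frac{1}{p}\sum_{i=0}^{p-1}\operatorname{Id} = \operatorname{Id}.
\end{equation*}
Because $D\psi(\bx_0)=\operatorname{Id}$ is invertible, the Inverse Function Theorem guarantees that $\psi$ is a $\mathcal{C}^1$-diffeomorphism on some neighbourhood of $\bx_0$, so the local conjugacy is genuine.

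The main obstacle is not any single computation but making precise the domain issues underlying the formal manipulations. One must confirm that $F^{i}(\bx)$ is defined for all relevant $i$ on a sufficiently small neighbourhood of $\bx_0$: since $F$ is a $\mathcal{C}^1$-diffeomorphism fixing $\bx_0$, continuity lets me shrink $\mathcal{U}$ so that the finitely many iterates $F^{0},\dots,F^{p}$ all remain inside the open set where everything is defined, which legitimizes both the index shift and the term-by-term differentiation. One should also note that the conjugacy identity $\psi\circ F = L\circ\psi$ holds as an identity of maps on this neighbourhood, and that $\psi(\bx_0)=\bx_0$, so the linearizing coordinates are centred at the fixed point as expected. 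With these domain points handled, the telescoping argument and the differential computation together complete the proof. \qed
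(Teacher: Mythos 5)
Your proposal is correct and follows essentially the same route as the paper: the averaging construction, the index-shift/telescoping verification that $\psi\circ F=L\circ\psi$ (which the paper leaves implicit), and the observation that $D\psi(\bx_0)=\operatorname{Id}$ combined with the inverse function theorem. One caveat: you should not invoke Proposition~\ref{lineal} to get $(DF(\bx_0))^p=\operatorname{Id}$, since that proposition is itself proved via the Montgomery--Bochner theorem and the citation would be circular; the identity follows directly from the chain rule applied to $F^p=\operatorname{Id}$ at the fixed point, which is exactly how the paper obtains it.
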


\begin{proof}  Since $F$ is $p$-periodic $(DF(\bx_0))^p=\operatorname{Id}$.
So $(\det(DF(\bx_0))^p=1$ and $DF(\bx_0)$ is invertible. Consider $\psi$ as in the
statement. By the inverse function theorem it is clear that the map $\psi$ is a
local diffeomophism because $D\psi(\bx_0)=\operatorname{Id}$. Moreover, using again the $p$-periodicity of $F$ we get
that $\psi(F(\bx))=L(\psi(\bx))$, as we wanted to prove.
\end{proof}

As we have seen in the proof of the above theorem, if  $F$ is a $p$-periodic
differentiable map with a fixed point $\bx_0,$ then
$(DF(\bx_0))^p=\operatorname{Id}$. Next result relates $p$ with the minimum
positive $m$ such that $(DF(\bx_0))^m=\operatorname{Id}$.

\begin{propo}\label{lineal} Let $F$ be a differentiable map having a fixed point $\bx_0$.
Assume that $F$ is $p$-periodic and let $m$ be the minimum positive
$m$ such that $(DF(\bx_0))^m=\operatorname{Id}$. Then $p=m.$
\end{propo}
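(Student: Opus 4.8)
My plan is to establish the two divisibility relations $m\mid p$ and $p\mid m$, whose conjunction forces $p=m$. The relation $m\mid p$ is the easy half and was essentially already noted in the discussion preceding the statement: since $\bx_0$ is fixed, the chain rule gives $D(F^p)(\bx_0)=\big(DF(\bx_0)\big)^p$, and the $p$-periodicity of $F$ means $F^p=\operatorname{Id}$, so $\big(DF(\bx_0)\big)^p=\operatorname{Id}$. Because the set of integers $n$ with $\big(DF(\bx_0)\big)^n=\operatorname{Id}$ is a subgroup of $\mathbb{Z}$, hence generated by its least positive element $m$, we get $m\mid p$; in particular $m\le p$.

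For the reverse relation the key step is to prove that $F^m=\operatorname{Id}$. Writing $L(\bx)=DF(\bx_0)\bx$, the Montgomery-Bochner Theorem furnishes a local diffeomorphism $\psi$ on a neighbourhood $V$ of $\bx_0$ satisfying $\psi\circ F=L\circ\psi$ on $V$. Iterating this conjugacy $m$ times and using $L^m=\operatorname{Id}$ yields $\psi\circ F^m=L^m\circ\psi=\psi$ on a (possibly smaller) neighbourhood of $\bx_0$; cancelling the injective map $\psi$ gives $F^m=\operatorname{Id}$ on an open set around $\bx_0$.

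The genuine difficulty, and the step I expect to cost the most care, is upgrading this local identity to a global one, since a priori $F^m$ could fail to be the identity far from $\bx_0$. In the rational (hence analytic) maps that are the subject of the paper this is immediate: $F^m$ is a rational map agreeing with the identity on a nonempty open set, so by the identity principle $F^m=\operatorname{Id}$ throughout its domain, giving $p\mid m$ and therefore $p=m$. For a general $\mathcal{C}^1$ map on a connected domain I would instead show that the interior of the set $\{\bx:F^m(\bx)=\bx\}$ is both open and closed. It is nonempty by the previous paragraph; it is closed because at any fixed point $\bz$ of the periodic map $H:=F^m$ the Montgomery-Bochner Theorem linearizes $H$ near $\bz$, and a linear map that fixes a whole open set must be the identity, so the accumulation of points where $H=\operatorname{Id}$ forces $DH(\bz)=\operatorname{Id}$ and hence $H=\operatorname{Id}$ near $\bz$. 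Connectedness of the domain then gives $F^m=\operatorname{Id}$ everywhere, i.e.\ $p\le m$, closing the argument.
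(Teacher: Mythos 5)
Your proposal is correct and its core is the same as the paper's: apply the Montgomery--Bochner Theorem at $\bx_0$ to conjugate $F$ with $L(\bx)=DF(\bx_0)\bx$ and deduce that $F^m=\operatorname{Id}$ precisely when $L^m=\operatorname{Id}$. The paper's proof is exactly this one line, $F^m=\psi^{-1}\circ L^m\circ\psi=\operatorname{Id}$, and it silently elides the point you single out as the genuine difficulty: the conjugacy $\psi$ is only a \emph{local} diffeomorphism near $\bx_0$, so this identity is only established on a neighbourhood of the fixed point, whereas concluding $p\le m$ requires $F^m=\operatorname{Id}$ on the whole domain (the paper's notion of $p$-periodicity is global). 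Your two supplements -- the identity principle for the rational/analytic maps actually treated in the paper, and the open-and-closed argument (re-applying Montgomery--Bochner to $H=F^m$ at boundary points of the interior of its fixed-point set) for a general $\mathcal{C}^1$ map on a connected domain -- close this gap cleanly, and they also make explicit that some such hypothesis is needed: on a disconnected domain the statement can fail outright (take $F$ to be the identity on one component and a nontrivial rotation on another, so that $m=1$ at a fixed point of the first component while $p>1$). So your write-up is a strictly more careful version of the paper's argument rather than a different one; the extra care buys an honest local-to-global step that the published proof takes for granted.
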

\begin{proof} By using the  Montgomery-Bochner Theorem we know that $F$ is
$\mathcal{C}^1$-conjugated to  $L(\bx)=DF(\bx_0)\bx$ in a
neighborhood of $\bx_0$. Thus $F=\psi^{-1}\circ L\circ \psi$, for
some $\mathcal{C}^1$ diffeomorphism $\psi$. Since
$L^m=\operatorname{Id}$ if and only if $F^m= \psi^{-1}\circ L^m\circ
\psi=\psi^{-1}\circ \psi=\operatorname{Id},$ the result follows.
\end{proof}

\begin{corol}\label{pbaix} Let $F_{\mathbf a}(\bx)=L\bx+G(\bx,\mathbf{a}),$ with
$\bx\in\mathcal{U}\subset\K^n$ and  $\mathbf{a}\in\K^m,$ a smooth
family of maps such that $G({\bf 0},\mathbf{a})\equiv D_{\bf
x}G({\bf 0},\mathbf{a})\equiv0$ for all $\mathbf{a}\in\K^m$. Assume
that $p$ is the minimum positive integer number such that
$L^p=\operatorname{Id}$. Then if $F_{\mathbf a}$ is periodic for
some $\mathbf{a}\in\K$ then it is $p$-periodic, i.e.
$F_{\mathbf{a}}^p=\operatorname{Id}.$
\end{corol}

In  particular note that  if $L=\operatorname{Id}$ then the only periodic case is
$F_\mathbf{a}(\bx)=\bx$ and when $L^2=\operatorname{Id}$ the periodicity conditions
are given by $F_\mathbf{a}(F_\mathbf{a}(\bx))\equiv\bx.$ For example the  fact proved in
\cite[Ex. 2]{R}, that the only periodic map of the form $F(x_1,x_2)=(x_2+a
x_1^2,x_1+bx_1x_2)$ corresponds to the linear case $a=b=0$, follows easily using
this approach.

Notice that using Montgomery-Bochner Theorem  a necessary condition
for a map of the form $F_\mathbf{a}(\bx)=L\bx+G(\bx,\mathbf{a}),$ to
be periodic is that $F_\mathbf{a}$ is linearizable in a
neighbourhood of~$\bf 0$. The linearizable cases can be detected by
following the well-know Normal Form Theory, which, as far as we
know, has not been used for this purpose. Some results useful for
applying it will be recalled in the next section.

\section{Periodicity conditions via Normal Form Theory}\label{norderpc}

We start introducing some well-known issues of Normal Form Theory, while referring
the reader to \cite[Sec. 2.5]{AP}, for further details.

Let $F:=F^{(1)}:\K^k\rightarrow\K^k$,  be a family of smooth maps
depending on some parameters and satisfying $F^{(1)}(0)=0.$ Let
\begin{equation}\label{F1}
F^{(1)}(\bx)=F^{(1)}_1(\bx)+F^{(1)}_2(\bx)+\cdots
+F^{(1)}_k(\bx)+O(|\bx|^{k+1})
\end{equation}
be the Taylor expansion of $F$ at $\mathbf{0},$ where $F_r^{(1)}\in
\mathcal{H}_r,$ the real vector space of maps whose components are
homogeneous polynomials of degree $r.$

The aim of the Normal Form Theory is to construct a sequence of transformations
$\Phi_n$, starting from $n=2$, such that at each step, $\Phi_n$  simplifies, as
much as possible, the terms of the corresponding homogeneous part of degree $n$.
To this end, let $F_1^{(1)}(\bx)=DF^{(1)}(\mathbf{0})\,\bx=:L\bx$ and suppose that
$$
F^{(n-1)}(\bx)=L \,\bx+F^{(n-1)}_n(\bx)+O(|\bx|^{n+1})\,,\,n\ge 2.
$$
Consider a transformation
$$
\bx=\Phi_{n}(\by):=\by+\phi_n(\by),
$$
with $\phi_n\in\mathcal{H}_n,$ such that it conjugates the map
$F^{(n-1)}$ with a new map $F^{(n)}$, via the conjugation
$$
F^{(n-1)}(\Phi_n)=\Phi_n(F^{(n)}).
$$
From the above equation, it can be  easily seen that
$$
F^{(n)}(\by)=L\,\by+L\,\phi_n(\by)-\phi_n(L
\by)+F_n^{(n-1)}(\by)+O(|\by|^{n+1}).
$$
Clearly, if $\phi_n(\by)$ can be chosen in such a way that
\begin{equation}\label{homological}
M_L\left(\phi_n(\by)\right):=L\,\phi_n(\by)-\phi_n(L \by)=-F_n^{(n-1)}(\by),
\end{equation}
then $F^{(n-1)}$ is transformed into
$$F^{(n)}(\by)=L \,\by+F^{(n)}_{n+1}(\by)+O(|\by|^{n+2})=L\,\by+O(|\by|^{n+1}).$$

The vectorial equation (\ref{homological}) is the well-known \emph{homological
equation} associated with $L=DF^{(1)}(\mathbf{0})$, and the existence of solutions
of it is the  necessary  and sufficient condition  to be able to remove the
homogeneous terms of degree $n$.

From now, one we will assume that the linear map is diagonalizable,
and so that it is $L=\mathrm{diag}(\lambda_i)_{i=1}^k$. In this
case, the linear operator
$M_L:=\mathcal{H}_n\rightarrow\mathcal{H}_n,$ given
in~\eqref{homological}, has the eigenvectors $\bx^{\mathbf m}\,e_i,$
$i=1,2,\ldots,k$, with ${\mathbf
m}=(m_1,m_2,\ldots,m_k)\in\mathcal{M}_n^k:=\{{\mathbf m}\in\N^k$
satisfying $\sum_{i=1}^k m_i=n\}$;
 $\bx^{\mathbf
m}=x_1^{m_1}\,x_2^{m_2}\cdots\,x_n^{m_k}$ where $\bx=(x_1,x_2,\ldots,x_k)\in\K^k$;
and where $e_i$ is the $i$-th member of the natural basis for $\K^k.$ Hence
\begin{equation}\label{vepsvapsml}M_L\left(\bx^{\mathbf m}\,e_i\right)=(\lambda_i-{\mathbf
\lambda}^{{\mathbf m}})\,\bx^{\mathbf m}\,e_i,\end{equation}
 where
${\mathbf \lambda}^{{\mathbf
m}}=\lambda_1^{m_1}\,\lambda_2^{m_2}\cdots\,\lambda_n^{m_k}.$

Set $$F_{n}^{(n-1)}(\bx)=\left(\sum_{{\mathbf m}} f^{(n-1)}_{1;{\mathbf m}}
\bx^{\mathbf m},\sum_{{\mathbf m}} f^{(n-1)}_{2;{\mathbf m}} \bx^{\mathbf
m},\ldots,\sum_{{\mathbf m}} f^{(n-1)}_{k;{\mathbf m}} \bx^{\mathbf m}\right),$$
and
$$\phi_{n}(\bx)=\left(\sum_{{\mathbf m}}
a_{1;{\mathbf m}} \bx^{\mathbf m},\sum_{{\mathbf m}} a_{2;{\mathbf m}}
\bx^{\mathbf m},\ldots,\sum_{{\mathbf m}} a_{k;{\mathbf m}} \bx^{\mathbf
m}\right),$$ where ${\mathbf m}\in\mathcal{M}_n^k$.

When $\lambda_i-{\mathbf \lambda}^{{\mathbf m}}\ne 0$ for all the
suitable values of $\mathbf m\in\N^k$ and for all $i=1,2,\ldots,k$,
it is said that there are no resonances. In this case the operator
$M_L$ is invertible, the homological equation always has solution
and so the linearization process can continue. On the contrary, if
$\lambda_i-{\mathbf \lambda}^{{\mathbf m}}=0$ for some  $\mathbf
{m}\in\mathcal{M}_n^k$ and some $i\in\{1,2,\ldots,k\}$, then the
vector $\mathbf {\lambda}=(\lambda_1,\lambda_1,\ldots,\lambda_k)$ is
said to be {\it resonant of order} $n$.  In this case,  by simple
inspection of the homological equation, and using
(\ref{vepsvapsml}), we obtain that the  \emph{the $n$th order
obstruction equation associated to the resonance} is given by
$$
(\lambda_i-{\mathbf \lambda}^{{\mathbf m}})\,a_{i;{\mathbf
m}}\,=\,-f^{(n-1)}_{i;{\mathbf m}}.
$$
However, there are some maps having this resonance  for which the process can
continue. This happens if the right-hand side of this scalar equations vanish,
namely $f^{(n-1)}_{i;{\mathbf m}}=0$, and these cases are the ones candidate to be
linearized. Hence,
 we have obtained the following result
 \begin{propo}\label{novaproposiciometode}
If $L:=\mathrm{diag}(\lambda_i)_{i=1}^k$, then a necessary condition for the map
(\ref{F1}) to be  periodic is given by  the $n$th order periodicity condition
associated to the resonance condition, $\lambda_i-{\mathbf \lambda}^{{\mathbf
m}}=0$, given by $ f_{i;{\mathbf m}}^{(n-1)}=0. $
 \end{propo}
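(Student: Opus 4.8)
The plan is to read off the result directly from the Normal Form machinery just developed, since Proposition~\ref{novaproposiciometode} is essentially a synthesis of the preceding discussion. First I would invoke the chain of reasoning established earlier: by the Montgomery--Bochner Theorem and Proposition~\ref{lineal}, any $p$-periodic map $F$ with a fixed point at $\mathbf{0}$ is locally $\mathcal{C}^1$-conjugate to its linear part $L=DF(\mathbf{0})$; in particular $F$ is linearizable near $\mathbf{0}$. Hence a necessary condition for periodicity is that the formal Normal Form procedure succeed in removing all higher-order terms, i.e. that the homological equation~\eqref{homological} be solvable at every order $n\ge 2$.

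Next I would localize the obstruction to a single order $n$ and a single resonant multi-index. Assuming $L=\mathrm{diag}(\lambda_i)_{i=1}^k$, the operator $M_L$ acts diagonally on the basis $\bx^{\mathbf m}\,e_i$ of $\mathcal{H}_n$ with eigenvalues $\lambda_i-{\mathbf\lambda}^{\mathbf m}$, as recorded in~\eqref{vepsvapsml}. Projecting the homological equation onto the eigendirection $\bx^{\mathbf m}\,e_i$ yields the scalar obstruction equation
$$
(\lambda_i-{\mathbf\lambda}^{\mathbf m})\,a_{i;{\mathbf m}}=-f^{(n-1)}_{i;{\mathbf m}}.
$$
When the resonance condition $\lambda_i-{\mathbf\lambda}^{\mathbf m}=0$ holds, the left-hand side vanishes identically regardless of the unknown coefficient $a_{i;{\mathbf m}}$, so this equation is solvable if and only if its right-hand side vanishes, that is, if and only if $f^{(n-1)}_{i;{\mathbf m}}=0$. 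Since solvability at this step is necessary for the linearization to proceed, and linearizability is necessary for periodicity, the condition $f^{(n-1)}_{i;{\mathbf m}}=0$ is a necessary periodicity condition, which is exactly the claim.

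The argument is structurally short because all the analytic content was front-loaded: the genuinely nontrivial inputs are the Montgomery--Bochner linearization (giving $\mathcal{C}^1$-conjugacy to $L$) and the eigenvalue computation~\eqref{vepsvapsml} for $M_L$ on $\mathcal{H}_n$. The one point I would be careful about is the gap between \emph{formal} and \emph{smooth} linearizability: Montgomery--Bochner furnishes a genuine local $\mathcal{C}^1$-conjugacy, and the vanishing of the obstruction $f^{(n-1)}_{i;{\mathbf m}}$ is a necessary consequence of the existence of \emph{any} such linearization, so I would phrase the implication as ``periodic $\Rightarrow$ linearizable $\Rightarrow$ obstruction vanishes'' rather than claiming the converse. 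Thus the proof reduces to assembling these already-proved facts, and the main (mild) obstacle is simply making the necessity direction airtight without overstating the result as a characterization.
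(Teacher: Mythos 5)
Your proposal is correct and follows essentially the same route as the paper: the paper's ``proof'' is precisely the preceding derivation of the homological equation, the diagonal action of $M_L$ with eigenvalues $\lambda_i-{\mathbf\lambda}^{\mathbf m}$, and the resulting scalar obstruction equation, together with the necessity of local linearizability supplied by the Montgomery--Bochner Theorem. Your extra remark distinguishing the $\mathcal{C}^1$ linearization from the formal one is a sensible precaution that the paper passes over silently, but it does not change the argument.
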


 \begin{nota}
Notice that $p$-periodic maps with $L$ diagonal are such that $\lambda^p_i=1,$ for all $i$.
Therefore for  these maps many resonances
 $\lambda_i-{\mathbf\lambda}^{\mathbf m}=0$ appear.
  \end{nota}

By following the Normal Form Algorithm, it is straightforward (and well known) to
see that the numerator of  $f_{i;{\mathbf m}}^{(n-1)}$ is a polynomial in the
coefficients of $F^{(1)}$. Thus, for each particular case, the above equations
give  periodicity conditions, which are algebraic in terms of the initial
parameters of the map, once expressed in form (\ref{F1}).

To fix the ideas we give a simple  example. Suppose that $k=2$. Assume that
$L=\mathrm{diag}(\alpha,\beta)$. Set $\Phi_{2}(\by):=\by+\phi_2(\by)$, where
$$\phi_2(x,y):=\left(\begin{array}{cc}
a_{20}x^2+a_{11}xy+a_{02}y^2\\b_{20}x^2+b_{11}xy+b_{02}y^2\end{array}\right).$$
Consider the map $F^{(1)}(\bx)=L\bx+F_2^{(1)}(\bx)+O(|\by|^{3})$
with
$$F_2^{(1)}(x,y)=\left(\begin{array}{l} f_{20}x^2+f_{11}xy+f_{02}y^2\\
g_{20}x^2+g_{11}xy+g_{02}y^2
\end{array}\right),$$
where to simplify the notation, and from now on, if there is no possibility of
confusion, we will drop the superscript ${\scriptstyle{(1)}}$ of the coefficients
of $F^{(1)}$.

 The homological
equation at order $2$ is $L\,\phi_2(\by)-\phi_2(L
\by)=-F_2^{(1)}(\by)$, and gives the following six scalar equations:
$$
\left\{
  \begin{array}{ll}
    (\alpha-\alpha^2)a_{20}=-f_{20},  &(\beta-\alpha^2)b_{20}=-g_{20},\\
    (\alpha-\alpha\beta)a_{11}=-f_{11},  & (\beta-\alpha\beta)b_{11}=-g_{11}, \\
    (\alpha-\beta^2)a_{02}=-f_{02},  & (\beta-\beta^2)b_{02}=-g_{02}.\\
  \end{array}
\right.
$$
If no one of the six \emph{$2$nd order  resonance conditions}:
$\alpha^2-\alpha,$ $\alpha \beta-\alpha,$ $\beta^2-\alpha,$
$\beta^2-\beta,$ $\alpha\beta-\beta,$ and $\alpha^2-\beta$, vanish,
there is no obstruction to remove the second order terms of
$F^{(1)}$ using the conjugation $\Phi_2$.

Suppose now, that the map $F^{(1)}$ is such that the
\emph{resonance} $\beta-\alpha^2=0$ occurs. Then the scalar equation
$(\beta-\alpha^2)b_{20}=-g_{20}$ is an \emph{obstruction equation}.
But this obstruction to the linearization process disappears if
$g_{20}$ vanishes. In summary,  if $\beta=\alpha^2$, then $g_{20}=0$
is a \emph{periodicity condition}.

\section{Proof of Theorems \ref{aplgene} and \ref{aplgene2}}\label{planar}

We keep the notation introduced in the above section, i.e., $F^{(k)}$ is the map
obtained after $k-1$ steps of the normal form procedure, $ F^{(k)}(\bx)=L
\,\bx+F^{(k)}_{k+1}(\bx)+O(|\bx|^{k+2}),$ and its coefficients are $f_{i,j}^{(k)}$
and $g_{i,j}^{(k)}$.  First consider the case treated in Theorem \ref{aplgene}:
$$
F^{(k)}(x,y)=\left(\alpha x\,+\sum_{i+j\geq k+1}
f_{i,j}^{(k)}x^iy^j,\frac{1}{\alpha}\, y\,+\sum_{i+j\geq k+1}
g_{i,j}^{(k)}x^iy^j\right).$$ It is easy to check that the scalar
equations associated to equation (\ref{homological}) are
$$
\left\{
  \begin{array}{l}
    \alpha(1-\alpha^{n-2i-1})\, a_{n-i,i}=-f^{(n-1)}_{n-i,i}, \\
    \alpha^{-1}(1-\alpha^{n-2i+1})\, b_{n-i,i}=-g^{(n-1)}_{n-i,i},
  \end{array}
\right.
$$
for $i=0,\ldots,n$. Hence, for any $\alpha$, and any odd $n$ we
obtain the periodicity conditions
\begin{equation}\label{mcc}
f^{(n-1)}_{\frac{n+1}2,\frac{n-1}2}=0 \quad\mbox{and}\quad
g^{(n-1)}_{\frac{n-1}2,\frac{n+1}2}=0.
\end{equation}
We remark that for a given  $\alpha$, primitive $m$-root of the unity, other
conditions can be added. For instance when $m=3$,
 $$
f_{0,2}^{(1)}=0,\,g_{2,0}^{(1)}=0,\,f_{4,0}^{(3)}=0,\,f_{1,3}^{(3)}=0,\,g_{3,1}^{(3)}=0
\,\mbox{ and }\, g_{0,4}^{(3)}=0,
  $$
 are also periodicity conditions. Also, when $m=5$, $f_{0,4}^{(3)}=0$
  and $g_{4,0}^{(3)}=0$ are periodicity conditions.

 Returning to the general case, we want to obtain explicitly the
first conditions given in~\eqref{mcc}. The first four  are given by
$f_{2,1}^{(2)}=g_{1,2}^{(2)}=0$; and $f_{3,2}^{(4)}=g_{2,3}^{(4)}=0$. Some
straightforward computations applying
 the Normal Form Algorithm explained in Section \ref{norderpc} show that
\[
f_{2,1}^{(2)}=\frac{\mathcal{P}_1(F)}{\alpha(\alpha^3-1)},\quad
g_{1,2}^{(2)}=\frac{\mathcal{P}_2(F)}{\alpha^3-1}\quad\mbox{and}\quad
f_{3,2}^{(4)}=\frac{\mathcal{P}_3(F)}{{\alpha}^{3} (\alpha^3-1)^3 \left(
{\alpha}^{2}+1 \right)  \left( \alpha+1 \right)
},\\
\] giving the desired result. Recall that $\alpha^n-1\ne0, n\le4$. We do
not give the periodicity condition associated to $g_{2,3}^{(4)}$ for
the sake of brevity and because we will not use in the specific
examples.

 Now we consider the maps that appear in Theorem \ref{aplgene2}. When $\alpha=1$
the result follows trivially by Corollary~\ref{pbaix}. When $\alpha\ne1$, the
scalar equations associated to equation (\ref{homological}) are
$$
\left\{
  \begin{array}{l}
    \alpha(1-\alpha^{n-i-1})\, a_{n-i,i}=-f^{(n-1)}_{n-i,i}, \\
   (1-\alpha^{n-i})\, b_{n-i,i}=-g^{(n-1)}_{n-i,i},
  \end{array}
\right.
$$
for $i=0,\ldots,n$. Hence, for any $\alpha\neq 1$, we obtain the
periodicity conditions
$$
f^{(n-1)}_{1,n-1}=0 \quad\mbox{and}\quad g^{(n-1)}_{0,n}=0.
$$
Thus the first periodicity conditions are given by $f_{1,1}=g_{0,2}=0$;
$f_{1,2}^{(2)}=g_{0,3}^{(2)}=0$; and $f_{1,3}^{(3)}=g_{0,4}^{(3)}=0$. Applying
 the Normal Form Algorithm we get
\[
f_{1,2}^{(2)}=\frac{\mathcal{P}_3(F)}{\alpha-1},\quad
g_{0,3}^{(2)}=\frac{\mathcal{P}_4(F)}{\alpha-1},\quad
f_{1,3}^{(3)}=\frac{\mathcal{P}_5(F)}{\alpha-1}\quad\mbox{and}\quad
g_{0,4}^{(3)}=\frac{\mathcal{P}_6(F)}{\alpha-1}.\\
\]

 \section{On some polynomial and rational maps}\label{polynomialmaps}

In this section, first  we study  the periodicity problem for a family of
triangular maps and then we apply this result and Theorem~\ref{aplgene} to
characterize the periodic   maps  of the family~(\ref{primeraP}), proving
Proposition~\ref{cascubic}. Finally we prove Proposition~\ref{propoultima}.

\subsection{Preliminary results and a triangular family}\label{prelcub}

First, recall  that a polynomial automorphism is a bijective
polynomial map with polynomial inverse. Also recall the following
well-known result, where as usual $\C^*$=$\C\setminus\{0\}.$

\begin{lem}\label{detconstant}
Let $F:\mathbb{C}^n\to\mathbb{C}^n$ be  polynomial map.
 \begin{itemize}
   \item[(a)] If $F$ is an automorphism then
   $\det (DF(\bx))\in \C^*.$
   \item[(b)] If $F$ is $p$-periodic, $p\ge1$ then $F$ is an
   automorphism and $\det (DF(\bx))\in\C^*.$
 \end{itemize}
\end{lem}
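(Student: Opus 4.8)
The plan is to deduce (b) immediately from (a), and to prove (a) from the chain rule together with the elementary fact that the only units of the polynomial ring $\C[x_1,\dots,x_n]$ are the nonzero constants. So I would first dispose of (b): since $F^p=\mathrm{Id}$, the map $F$ is a bijection whose inverse is $F^{p-1}=F\circ\cdots\circ F$ ($p-1$ factors), and a composition of polynomial maps is again polynomial; hence $F$ is a polynomial automorphism and (a) applies verbatim to yield $\det(DF(\bx))\in\C^*$.

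For part (a), let $G:=F^{-1}$ denote the polynomial inverse. Differentiating the identity $G\circ F=\mathrm{Id}$ and applying the chain rule gives $DG(F(\bx))\,DF(\bx)=\mathrm{Id}$ for all $\bx\in\C^n$. Taking determinants, $\det\!\big(DG(F(\bx))\big)\cdot\det\!\big(DF(\bx)\big)=1$ identically on $\C^n$. Now the entries of $DF$ are polynomials in $\bx$, so $\det(DF(\bx))$ is a polynomial in $\bx$; likewise $\det(DG(F(\bx)))$ is a polynomial in $\bx$, being the composition of the polynomial $\by\mapsto\det(DG(\by))$ with the polynomial map $F$. Thus we have two elements of $\C[x_1,\dots,x_n]$ whose product equals the constant $1$.

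The key step is then the ring-theoretic observation: $\C[x_1,\dots,x_n]$ is an integral domain, so $\deg(pq)=\deg p+\deg q$; if $pq=1$, then $\deg p+\deg q=0$ forces both factors to have degree $0$, i.e. to be constants, and they are nonzero since their product is $1$. Applying this with $p=\det(DF(\bx))$ shows $\det(DF(\bx))$ is a nonzero constant, i.e. lies in $\C^*$, completing (a). There is no real obstacle here: the only point requiring a moment's care is verifying that $\det(DG(F(\bx)))$ is genuinely a polynomial in $\bx$ (so that the degree argument is legitimate), and everything else is a routine application of the chain rule and determinant multiplicativity.
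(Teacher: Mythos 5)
Your proof is correct and follows essentially the same route as the paper's: part (b) reduces to part (a) via $F^{-1}=F^{p-1}$, and part (a) differentiates the composition with the inverse, takes determinants of the resulting identity, and concludes that $\det(DF(\bx))$ is a nonzero constant. The only (immaterial) difference is the final justification: the paper invokes the fact that a non-vanishing complex polynomial must be constant, whereas you use degree additivity in the integral domain $\C[x_1,\dots,x_n]$ to see that the units are exactly the nonzero constants; both are valid one-line arguments.
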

\begin{proof}
(a) Since $F\circ F^{-1}=\operatorname{Id}$, we have that $
\det\left(DF(F^{-1}(\mathbf{x}))\right)\cdot
\det\left(DF^{-1}(\mathbf{x})\right)\equiv 1.$ Since the only
non-vanishing complex polynomials are the constant ones, the result
follows.

(b) If $F$ is $p$-periodic, then  $ F^{-1}=F^{p-1}$, hence $F$ is a
polynomial automorphism.
\end{proof}

In fact, the reciprocal of item (a) above is precisely the celebrated Jacobian
Conjecture.

Next results  characterize the periodic cases in  a family of triangular maps.

\begin{teo}\label{trian}
Let $\alpha$ be a primitive $p$-root of unity, and consider
the $\mathcal{C}^1$-map $F:\K^2\to\K^2$,
$$
    F(x,y)=\left(\alpha x+f(y),y/\alpha\right),\label{pppp}
$$
with $f(0)=f'(0)=0.$ Then:
\begin{enumerate}[(i)]
\item $F$ is periodic if and only if
$$
\sum_{j=0}^{p-1} \alpha^jf(\alpha^j y)\equiv 0\label{ccp}
$$
and then it is $p$-periodic.
\item If $F$ is $p$-periodic, the  linearization given in the Montgomery-Bochner Theorem,
  $$
\psi(\bx)=\frac{1}{p}\sum_{j=0}^{p-1} (DF(0,0))^{-j}(F^j)(\bx),
  $$
is a global linearization.
\end{enumerate}
\end{teo}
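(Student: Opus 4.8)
The plan is to exploit the triangular structure of $F$ so that both the iterates $F^n$ and the Montgomery-Bochner average $\psi$ can be written in closed form; everything then reduces to bookkeeping with the powers of $\alpha$. Since the second coordinate evolves autonomously as $y\mapsto y/\alpha$, after $n$ steps it is simply $y\alpha^{-n}$, and the first coordinate obeys the scalar recursion $x_{n+1}=\alpha x_n+f(y\alpha^{-n})$.

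For part (i), I would first prove by induction on $n$ that
\[
F^n(x,y)=\Bigl(\alpha^n x+\sum_{k=0}^{n-1}\alpha^{\,n-1-k}f(y\alpha^{-k}),\;y\alpha^{-n}\Bigr),
\]
the inductive step being immediate from the recursion above. Setting $n=p$ and using $\alpha^p=1$ collapses the first component to $x+\sum_{k=0}^{p-1}\alpha^{\,p-1-k}f(y\alpha^{-k})$ and the second to $y$, so $F^p=\operatorname{Id}$ is \emph{equivalent} to the vanishing of that sum. Factoring out $\alpha^{p-1}=\alpha^{-1}$ and using that $k\mapsto\alpha^{-k}$ merely permutes the $p$-th roots of unity rewrites this as $\alpha^{-1}\sum_{j=0}^{p-1}\alpha^{j}f(\alpha^{j}y)$, which identifies the stated condition. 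To pin down the period, note $DF(0,0)=\operatorname{diag}(\alpha,1/\alpha)$, so the least $m$ with $(DF(0,0))^m=\operatorname{Id}$ is $m=p$; Proposition~\ref{lineal} then forces any periodic $F$ in this family to be exactly $p$-periodic, while conversely the condition gives $F^p=\operatorname{Id}$ and hence periodicity.

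For part (ii), the idea is that the Montgomery-Bochner averaging formula inherits the triangular shape of $F$. Using $L^{-j}=\operatorname{diag}(\alpha^{-j},\alpha^{j})$ together with the closed form above, each term $L^{-j}F^j(x,y)$ has second component exactly $y$ and first component $x+\sum_{k=0}^{j-1}\alpha^{-1-k}f(y\alpha^{-k})$. Averaging over $j$ yields
\[
\psi(x,y)=\bigl(x+g(y),\,y\bigr),\qquad g(y):=\frac1p\sum_{j=0}^{p-1}\sum_{k=0}^{j-1}\alpha^{-1-k}f(y\alpha^{-k}),
\]
a $\mathcal{C}^1$ map of the whole plane whose inverse $(u,v)\mapsto(u-g(v),v)$ is also globally defined, so $\psi$ is a global diffeomorphism. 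Finally I would verify that the conjugacy $\psi\circ F=L\circ\psi$ holds \emph{globally}, not merely near the origin: reindexing gives $\psi(F(\bx))=\frac1p\sum_{i=0}^{p-1}L^{-i}F^{i+1}(\bx)=L\cdot\frac1p\sum_{j=1}^{p}L^{-j}F^{j}(\bx)$, and using $F^p=L^p=\operatorname{Id}$ to replace the $j=p$ term by the $j=0$ term collapses the average back to $\psi$, yielding $\psi\circ F=L\circ\psi$ everywhere. Combined with the global invertibility of $\psi$, this shows the linearization is global.

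The routine but error-prone part I expect to be the index bookkeeping: getting the exponents in the closed form for $F^n$ right and, especially, correctly relabelling the sum in part (i) so that the permutation $k\mapsto\alpha^{-k}$ of the roots of unity produces exactly $\sum_{j}\alpha^{j}f(\alpha^{j}y)$. The conceptual content—that triangularity passes to the iterates and to $\psi$, and that the global identities $F^p=L^p=\operatorname{Id}$ make the Montgomery-Bochner conjugacy global rather than local—is straightforward once the iterate formula is in hand.
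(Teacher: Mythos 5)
Your proposal is correct and follows essentially the same route as the paper: an explicit closed form for $F^n$ from the triangular structure, a reindexing of the resulting sum at $n=p$ using that $\alpha$ is a primitive $p$-th root of unity (combined with Proposition~\ref{lineal}/Corollary~\ref{pbaix} to pin the period down to $p$), and for (ii) the observation that each $L^{-j}F^j$ is a shear $(x,y)\mapsto(x+\alpha^{-j}g_j(y),y)$, so $\psi(x,y)=(x+g(y),y)$ is a global diffeomorphism. Your explicit global verification of $\psi\circ F=L\circ\psi$ is a small addition the paper leaves implicit, but it is not a different method.
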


\begin{proof}
(i) By Corollary~\ref{pbaix}, if $F$ is periodic then it is $p$-periodic. It is
not difficult to prove that
\[F^p(x,y)=\left(\alpha^px +\sum_{j=1}^{p} \alpha^{p-j} f\left(\frac{y}{\alpha^{j-1}}\right)
, \frac{y}{\alpha^p}\right).\] Therefore, using that $\alpha^p=1,$
the condition of being $p$-periodic writes as
\[\sum_{j=1}^{p} \alpha^{p-j} f\left(\frac{y}{\alpha^{j-1}}\right)=\sum_{j=1}^{p} \alpha^{p-j} f\big(\alpha^{p+1-j}y\big)=
\sum_{n=1}^{p} \alpha^{n-1} f\big(\alpha^{n}y\big)=0,
 \]
for all $y\in\K$. Multiplying the last expression by $\alpha$ we obtain
condition~\eqref{ccp}.

(ii) Note that in the expression of the local diffeomorphism $\psi$ given in the statement,
\[
(DF(0,0))^{-j}(F^j)(\bx)=\left(
                           \begin{array}{cc}
                             \alpha^{-j} & 0 \\
                             0 & \alpha^j \\
                           \end{array}
                         \right)\left(
                                  \begin{array}{c}
                                    \alpha^jx+ g_j(y) \\
                                    y/\alpha^j \\
                                  \end{array}
                                \right)= \left(
                                  \begin{array}{c}
                                    x+ \alpha^{-j}g_j(y) \\
                                    y \\
                                  \end{array}
                                \right),
\]
for some given map $g_j(y).$ Therefore $\psi(x,y)=(x+g(y),y)$, for some map smooth
map $g$. Since clearly, $\psi$ is a diffeomorphism, the result follows.
\end{proof}

\begin{corol}\label{ccc}Let $\alpha$ be a primitive $p$-root of unity.
A map $F:\K^2\to\K^2$,
$$
    F(x,y)=\left(\alpha x+\sum_{k=2}^\infty f_k y^k,\frac y \alpha\right),
$$
is $p$-periodic if and only if $f_k=0$ for all $k=mp-1, m\in\N.$ In
particular, it is always periodic if $f$ is a polynomial and
$p>\deg(f)+1.$
\end{corol}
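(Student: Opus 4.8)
The plan is to apply Theorem~\ref{trian} directly, since the map here is exactly of the triangular form treated there, with $f(y)=\sum_{k\ge2}f_ky^k$ satisfying $f(0)=f'(0)=0$. By part~(i) of that theorem, $F$ is periodic (and then automatically $p$-periodic, as $\alpha$ being a primitive $p$-root forces the minimal period via Corollary~\ref{pbaix}) if and only if the identity $\sum_{j=0}^{p-1}\alpha^jf(\alpha^jy)\equiv0$ holds for all $y\in\K$. Thus the entire problem reduces to making this vanishing condition explicit in terms of the coefficients $f_k$.

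First I would substitute the series for $f$ and interchange the two summations, which is legitimate since we only need the identity as an identity of power series in $y$. Writing $f(\alpha^jy)=\sum_{k\ge2}f_k\alpha^{jk}y^k$ gives
\[
\sum_{j=0}^{p-1}\alpha^jf(\alpha^jy)=\sum_{k\ge2}f_k\Big(\sum_{j=0}^{p-1}\alpha^{j(k+1)}\Big)y^k,
\]
so, comparing coefficients of $y^k$ (by uniqueness of power-series coefficients), the condition becomes $f_k\,S_k=0$ for every $k$, where $S_k:=\sum_{j=0}^{p-1}\alpha^{j(k+1)}$. The crux is then the elementary evaluation of the geometric sum $S_k$: since $\alpha$ is a primitive $p$-root of unity, $\alpha^{k+1}$ is a $p$-root of unity, so $S_k=p$ when $p\mid(k+1)$ (all terms equal $1$), while otherwise $S_k=(\alpha^{p(k+1)}-1)/(\alpha^{k+1}-1)=0$ because $\alpha^{p(k+1)}=(\alpha^p)^{k+1}=1$. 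Hence $f_kS_k=0$ forces $f_k=0$ precisely when $p\mid(k+1)$, i.e.\ when $k=mp-1$ for some $m\in\N$, and imposes nothing on the remaining coefficients; this is exactly the claimed equivalence.

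For the final assertion, if $f$ is a polynomial with $p>\deg(f)+1$, then every $k$ with $f_k\neq0$ satisfies $2\le k\le\deg(f)\le p-2$, so $2\le k+1\le p-1$ can never be a multiple of $p$. The obstruction set $\{k=mp-1\}$ therefore meets no nonzero coefficient and the periodicity condition holds vacuously, giving periodicity. I expect no genuine obstacle here: the only points needing a line of care are the coefficientwise comparison and the case split in evaluating $S_k$, both of which are routine.
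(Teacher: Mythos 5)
Your proof is correct and follows essentially the same route as the paper: both apply the periodicity criterion of Theorem~\ref{trian}(i), expand $f$ as a power series, and evaluate the geometric sums $\sum_{j=0}^{p-1}\alpha^{j(k+1)}$, which vanish unless $p\mid(k+1)$, yielding the condition $f_k=0$ exactly for $k=mp-1$. The closing remark on polynomials with $p>\deg(f)+1$ is also the intended (immediate) consequence.
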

\begin{proof} Let us write condition~\eqref{ccp} in this setting.  We obtain
\begin{align*}
0=&\sum_{k\ge2}\left(1+\alpha^{k+1}+(\alpha^{k+1})^2+\cdots+
(\alpha^{k+1})^{p-1}\right) f_k y^k\\
=&\sum_{k\ge2,\,k\ne mp-1}\frac{1-(\alpha^{k+1})^p}{1-\alpha^{k+1}}
f_k y^k+ \sum_{k\ge2,\, k=mp-1}p f_k y^k=p\left(\sum_{k\ge2,\,
k=mp-1}f_k y^k\right).
\end{align*}
Therefore, the characterization holds.
\end{proof}

\subsection{Proof of Proposition \ref{cascubic}}

First, we apply Lemma \ref{detconstant} (b), taking into account
that in this case $\mathrm{det}\left(DF(\mathbf{x})\right)\equiv 1$,
obtaining that a necessary condition for a map $F$ in the family of
maps (\ref{primeraP}) to be  periodic, is to belong to one of the
following cases I,II, III and IV considered below. The cases $p\le
4$ follow easily using Corollary~\ref{pbaix}. So, from now one we
also assume that $p\ge5.$

\noindent \textsl{Case I:} It holds that $g_{1,2}\neq 0$,  $g_{0,3}\neq 0$ and
$$\begin{cases}\begin{array}{ll}
f_{2,0}=-\displaystyle{\frac{{\alpha}^{2}g_{1,1}}{2}} ,
f_{1,1}=-\displaystyle{\frac {3{\alpha}^{2}g_{1,1}g_{0,3}}{
g_{1,2}}}, f_{0,2}=-\displaystyle{\frac
{9{\alpha}^{2 }g_{0,3}^{2}g_{1,1}}{2{g_{1,2}}^{2}}},\\
f_{3,0}=-\displaystyle{\frac {{\alpha}^{2}g_{1,2}^{2}}{9g_{0,3}}},
f_{2,1}=-{\alpha}^{2}g_{1,2}, f_{1,2}=-3{\alpha}^{2}g_{0,3},
f_{0,3}=-\displaystyle{\frac {3{\alpha}^{2}g_{0,3}^{2} }{g_{1,2}}},\\
g_{2,0}=\displaystyle{\frac {g_{1,1}g_{1,2}}{6 g_{0,3}}}, g_{0,2}={
\frac { 3g_{1,1}g_{0,3}}{2g_{1,2}}}, g_{3,0}=\displaystyle{\frac
{g_{1,2}^{3}}{27g_{0,3}^{2}}},
 g_{2,1}=\displaystyle{\frac {g_{1,2}^{2}}{3g_{0,3}}}.
\end{array}
\end{cases}$$
 When $g_{1,1}\neq 0$, both conditions
$\mathcal{P}_1(F)=\mathcal{P}_2(F)=0$ in Theorem~\ref{aplgene} give
$$g_{0,3}=-{\frac {2\left( \alpha -1\right)  \left(
{\alpha }^{2}+\alpha+1 \right)g_{1,2}^{2}  }{3\alpha\, \left(
\alpha+1
 \right)  \left( 2{\alpha}^{2}+\alpha+2 \right)g_{1,1}^{2} }}.$$
Observe that  $2{\alpha}^{2}+\alpha+2\ne0$ because $\alpha$ is a root of unity.
Applying again Theorem~\ref{aplgene}, the condition $\mathcal{P}_3(F)=0$ writes as
$$ \displaystyle{\frac {{\alpha}^{5} \left( \alpha-1
\right) ^{2} \left( \alpha+1 \right) ^{2} \left( {\alpha}^{2}+\alpha+1 \right)
^{2} P_8(\alpha)\,g_{1,2}^{2}}{3 \left( 2\,{\alpha}^{2}+\alpha+2 \right)
^{2}}}=0,$$ where
$$P_8(\alpha)=30{\alpha}^{8}+36{\alpha}^{7}+133{\alpha}^{
6}+114{\alpha}^{5}+214{\alpha}^{4}+114{\alpha}^{3}+133{\alpha}
^{2}+36\alpha+30.$$

Since $g_{1,2}\ne0$,     $\alpha\neq -1, \alpha^3\neq 1$, and the roots of $P_8$
have modulus different from~$1$ we obtain that the above equality never holds and
there are no periodic maps in this subfamily when $g_{1,1}\ne0$. When $g_{1,1}=0$,
then $\mathcal{P}_1(F)\ne0$   and the same result holds.

\smallskip

 \noindent \textsl{Case II:}
$$\begin{cases}\begin{array}{ll} f_{2,0}=f_{1,1}=f_{3,0}=f_{2,1}=f_{1,2}=0,\\
g_{2,0}=g_{1,1}=g_{0,2}=g_{3,0}=g_{2,1}=g_{1,2}=g_{0,3}=0,
\end{array}
\end{cases}$$
being $f_{0,2}$ and $f_{0,3}$ free parameters. Hence, in this case $F$ has the
form
\begin{align}\label{aa}
F(x,y)=(\alpha x+f_{0,2}y^2+f_{0,3}y^3,y/\alpha) \end{align}
 and the result
follows from Corollary~\ref{ccc}.

\smallskip

\noindent \textsl{Case III:} It holds that $f_{0,2}\neq 0$,
$$\begin{cases}\begin{array}{ll}
f_{2,0}=\displaystyle{\frac {{\alpha}^{4}{g_{0,2}}^{2}}{ f_{0,2}}},
f_{1,1}=-2{\alpha}^{2} g_{0,2},
f_{3,0}=f_{2,1}=f_{1,2}=f_{0,3}=0,\\
g_{2,0}=\displaystyle{\frac {{\alpha}^{4}{ g_{0,2}}^{3}}{{
f_{0,2}}^{2}}}, g_{1,1}=-\displaystyle{\frac {{\alpha}^{2}{
2g_{0,2}}^{2}}{f_{0,2}}}, g_{0,3}=g_{2,1}=g_{1,2 }=g_{3,0}=0,
\end{array}
\end{cases}$$
being  $g_{0,2}$ a free parameter. By using again condition $\mathcal{P}_1(F)=0$
in Theorem~\ref{aplgene}  we obtain that a necessary condition for  periodicity is
$$
\displaystyle{\frac{2{\alpha}^{6} \left( \alpha+1 \right)
 \left( 2{\alpha}^{2}+\alpha+2 \right)g_{0,2}^{3} }{f_{0,2}}}=0.
$$
So $g_{0,2}=0$ and we are in a subcase of~\eqref{aa}. Hence we are done.

\noindent \textsl{Case IV:}
$$\begin{cases}\begin{array}{ll}
f_{2,0}=f_{1,1}=f_{0,2}=f_{3,0}=f_{2,1}=f_{1,2}=f_{0,3}=0,\\
g_{1,1}=g_{0,2}=g_{2,1}=g_{1,2}=g_{0,3}=0,
\end{array}
\end{cases}$$
being $g_{2,0}$ and $g_{3,0}$ free parameters. This case, is
symmetric with respect the Case II and can be treated analogously.

\subsection{Proof of Proposition \ref{propoultima}}

The proof when $p\le3$ follows easily from Corollary~\ref{pbaix}. So, from now on
we can assume that $p\ge4.$
 From Theorem \ref{aplgene2} we have $\mathcal{P}_1(F)=c$ and
$\mathcal{P}_2(F)=t$. Hence, to obtain a periodic map, these
coefficients must vanish. When $c=t=0$,  $\mathcal{P}_4(F)=-\alpha
m+m-ds$, so another necessary periodicity condition is
$m=ds/(1-\alpha).$ Taking into account the above relation, one gets
$\mathcal{P}_3(F)= \left( \alpha s-2b+2s \right) d$.

Let us assume first, that $d\neq0$. In this case, imposing that
$\mathcal{P}_3(F)$ vanishes we get $b=(\alpha+2)s/2$. In this case
$\mathcal{P}_5(F)=-4\mathcal{P}_6(F)=-4rd^2$, hence $r=0$ is another
necessary periodicity condition. Assuming that $r$ vanishes and
using the expressions of $\mathcal{P}_7(F)$ and $\mathcal{P}_8(F)$
given in Appendix~B we get
$$
\mathcal{P}_7(F)=-{d}^{2}s \left( \alpha-2 \right)  \left( \alpha
s-2d \right)/2\,\mbox{ and }\,\mathcal{P}_8(F)=-{d}^{2}s \left(
\alpha s-2\,d \right)/2.
$$
If $s=0$, the map is then given by $ F(x,y)=(\alpha x+dy^2,y)$ which
is $p$-periodic  if and only if  $\alpha$ is a primitive $p$-root of
the unity, because
$$
F^p(x,y)=\left(\alpha^p
x+dy^2\sum_{i=0}^{p-1}\alpha^i,y\right)=\left( \alpha^p
x+dy^2\left(\frac{1-\alpha^{p}}{1-\alpha}\right),y\right)=\left(
x,y\right).
$$
If $s\neq 0$, then from the above expressions of $\mathcal{P}_7(F)$
and $\mathcal{P}_8(F)$  we have that $d=\alpha s/2$. In this case we
will see that the map is not periodic. Indeed, if
 $c=r=t=0$, $b=(\alpha+2)s/2$, $d=\alpha s/2$ and $m=\alpha
s^2/(2(1-\alpha))$,  then map has a continuum of fixed points
containing the origin, as well as an isolated fixed point at $
\bx_{0}=((1-\alpha)/s,0). $

Notice that if $F$ were $p$-periodic then $\alpha^p=1$ and
$(DF(\bx_{0}))^p=\operatorname{Id}$. Hence $|\det(DF(\bx_{0}))|=1$. On the other
hand $|\det(DF(\bx_0)|=4/|\alpha+1|^2\ne1$, because  $|\alpha|=1$ and recall that
$\alpha\ne1.$ Therefore the map is not periodic.

Finally, when $d=0$, since $m=ds/(1-\alpha),$  the map becomes
$F(x,y)=(\alpha x+bx^2,y+rx^2+sxy)$. From Lemma \ref{detconstant}
its determinant must be constant, which trivially gives $b=s=0$.
Then $ F(x,y)=(\alpha x,y+rx^2)$. Following a similar argument as
above it is easy to see that the map is $p$-periodic  if and only
$\alpha$ is a primitive $p$-root of the unity.


\section{On some second order rational difference
equations}\label{seceed}

\subsection{Global periodicity in the  Lyness
recurrence}\label{seclyn2}

\begin{proof}[{Proof of Proposition~\ref{lynesnormal}}]
It is well known that the dynamics of the  Lyness  recurrence can be studied using
the dynamical system generated by its associated Lyness map
\begin{equation}\label{lynesmap}
G_{a}(x,y)=\left(y,\frac{a+x}{y}\right).
\end{equation}
It is easy to see that $G_{a}^p\ne\operatorname{Id}$ for $p\le4$. So
from now one we search for $p$-periodic maps with $p\ge5.$

In order to significatively simplify the computations to apply
Theorem~\ref{aplgene}  we will introduce a new parameter $\lambda$ being one of
the eigenvalues of the Jacobian matrix of $G_a$ at some fixed point.

Indeed, $G_a$ always has  some fixed point  $(x_0,x_0)$ with $ x_0^2-x_0-a=0$ and
$x_0\ne0$. The eigenvalues $\lambda$ of $DG_a$ at this fixed point satisfy
$\lambda^2-\lambda/x_0+1=0$. Using both equations  it is natural to introduce the
following rational parametrization for $a$,
\begin{equation}\label{barxia}
 a=-\frac{\lambda(\lambda^2-\lambda+1)}{(1+\lambda^2)^2},\quad
 \lambda^2+1\ne0,\,\lambda\ne0,\quad\lambda\in\C.
\end{equation}
Note that this parametrization covers all values of $a\in\C.$ Moreover using it, a
fixed point is $x_0=\lambda/(1+\lambda^2)$ and its associated eigenvalues are
$\lambda$ and $1/\lambda$.

After the translation $(x,y)\to(x-x_0,y-x_0)$, that brings the fixed point to the
origin, the map $G_{a}$ conjugates, using again $x$ and $y$ as variables, with
$$
g_{\lambda}(x,y):=\left(y,-\frac{\lambda x-(1+\lambda^2)y}{\lambda
+x(1+\lambda^2)}\right) \quad\mbox{ with linear part }\quad
L_{\lambda}(x,y)=\left(y,-x+\frac{1+\lambda^2}{\lambda} y\right).
 $$
The linear change of variables $\Psi(x,y)=(x-\lambda y,x-y/\lambda)$ gives a
conjugation between $L_{\lambda}$ and its diagonal form $L(x,y):=(\lambda
x,y/\lambda)$. Using this conjugation we consider the map $F_{\lambda}:=\Psi\circ
g_{\lambda}\circ \Psi^{-1}$, which,  for the sake of brevity, we do not explicitly
write. We will apply Theorem~\ref{aplgene} to $F_\lambda$  which is conjugated to
the Lyness map $G_{a}$.

Imposing that $\mathcal{P}_1(F_\lambda)=0$ we get that the a necessary condition
for $F_\lambda$ to be periodic is
$$
  \left({\lambda}^{2} -\lambda+1 \right)  \left(
{\lambda}^{4 }+{\lambda}^{3}+{\lambda}^{2}+\lambda+1 \right)  \left(
{\lambda}^{2 } +1\right) ^{3} =0.
$$
 The roots of the factor ${\lambda}^{2} -\lambda+1$,  which are primitive
 $6$-th roots of the unity, correspond to the $6$-periodic case,  $a=0$.  Finally, using
again~(\ref{barxia}), we get that all the roots of the polynomial ${\lambda}^{4
}+{\lambda}^{3}+{\lambda}^{2}+\lambda+1$, which are primitive $5$-th roots of the
unity, correspond to the  $5$-periodic case, $a=1$.
\end{proof}

\medskip

\subsection{Non global periodicity in a Gumovski-Mira
recurrence}\label{secgm}

\begin{proof}[Proof of Proposition \ref{gummir}]
Proceeding as in the Lyness case, we consider the map associated with the
Gumovski-Mira recurrence,
$$
G_b(x,y)=\left(y,-x+\frac{y}{ b +y^2}\right).
$$
It is easy to prove that $G_{b}^p\ne\operatorname{Id}$ for $p\le4$.
So from now one we look for $p$-periodic maps with $p\ge5.$

We consider separately the case $b=0.$ In this situation
$\mathbf{x}_0:=(\sqrt2/2,\sqrt2/2)$ is a fixed point of $G_0$. It is
easy to see that $(DG_0(\bx_0)^p\ne\operatorname{Id}$, for any
positive integer $p$, because the matrix is not diagonalizable. So
$G_0$ is not a periodic map.

When $b\ne0$  we introduce a new parameter $\lambda$, and write  $b
=\lambda/(1+\lambda^2)$ with $\lambda^2+1\ne0$ and $\lambda\ne0.$ Notice that this
parametrization covers all values of $b$ in $\C\setminus\{0\}$. We rename  the new
map corresponding to $G_b$ as $g_\lambda$. The eigenvalues of its Jacobian matrix
at the origin, which is always a fixed point, are $\lambda$ and $1/\lambda$. The
linear map $\Psi(x,y)=(x-\lambda y,x-y/\lambda)$ is a conjugation between
$Dg_\lambda(\mathbf{0})$ and its diagonal form $L(x,y):=(\lambda x,y/\lambda)$.
Using this conjugation we consider the map $F_\lambda:=\Psi\circ g_\lambda \circ
\Psi^{-1}$. Using Theorem~\ref{aplgene} we impose that
$\mathcal{P}_2(F_\lambda)=0$. We get that a necessary condition for $F_\lambda$ to
be periodic is
$$ \left({\lambda}^{2}+ 1 \right) ^{2} \left(
{\lambda}^{2}+ \lambda+1 \right) =0.$$ If $\lambda$ is a root of ${\lambda}^{2}+
\lambda+1$,  then it is a primitive $3$rd-root of the unity. Then by
Corollary~\ref{pbaix}, $F_\lambda$ should be globally $3$-periodic. But we have
already discarded this possibility.  So the result follows.
\end{proof}

\subsection{Global periodicity in the $2$-periodic non-autonomous
Lyness recurrence}\label{secGPFba}

In this section we study the problem of the global periodicity of the the sequence
generated by the $2$-periodic Lyness recurrence~\eqref{eq}. The sequence  $\{x_n\}$ given by
this recurrence can be reobtained as
\[
(x_1,x_2)\xrightarrow{G_a}(x_2,x_3)\xrightarrow{G_b}(x_3,x_4)
\xrightarrow{G_a}(x_4,x_5)\xrightarrow{G_b}(x_5,x_6)\xrightarrow{G_a}\cdots
\]
where $G_\alpha(x,y)$, with $\alpha\in\{a,b\}$, is the Lyness map given
in~\eqref{lynesmap}. So the behavior of (\ref{eq}) is given by the dynamical
system generated by the map:
\begin{equation}\label{gab}
G_{b,a}(x,y):=G_b\circ G_a (x,y)=\left(\frac {a+y}{x},\frac {a+b x+ y}{x
y}\right).
\end{equation}

\begin{proof}[Proof of Theorem~\ref{teolyness2p}]
As we have seen it suffices to study the perodicity problem for the
map~\eqref{gab}. It is easy to see that
$G_{a}^p\ne\operatorname{Id}$ for $p=1,2,4$. Moreover it is
$3$-periodic if and only if $a=b=0$. Notice that this case
corresponds to the globally 6-periodic recurrence. We  continue
searching $p$-periodic maps with $p\ge5.$

Following similar ideas that in the previous subsections we
introduce a more suitable rational parametrization of $a$ and $b$. We consider
\begin{equation}\label{abBlambda}
\begin{array}{l}
a=\displaystyle{{\frac {{B}^{3} \left( {\lambda}^{2}+1 \right) +\lambda\, \left(
2\,{B }^{3}-1 \right) }{B \left( \lambda+1 \right)^{2}}}  },\\
b=-B+(B^2-a)^2, \quad\mbox{with}\quad B(\lambda+1)\ne0\quad\mbox{and}\quad \lambda\ne0.
\end{array}
\end{equation}
Using these new parameters we cover all the values of $a$ and $b$ in $\C$. Moreover
the fixed point is $(B,B^2-a),$ where $a$ is given
in~\eqref{abBlambda}, and the eigenvalues of $G_{b,a}$ at this point are $\lambda$
and $1/\lambda.$ After a translation $(x,y)\to(x-B,y-(B^2-a))$, which brings the
fixed point to the origin, the map $G_{b,a}$ conjugates, using again $x$ and $y$
as variables, with
$$
g_{B,\lambda}(x,y)=\left({\frac {y-Bx}{x+B}},-\frac{{B}^{2} \left(
\lambda+1 \right) ^{2}x-B \left({\lambda}^{2}+ \lambda+ 1 \right)
y+\lambda\,xy }{\left( B \left( \lambda+1 \right) ^{2}y+\lambda
\right)  \left( x+B
 \right)
}\right),
$$
with linear part.
$$
L_{B,\lambda}(x,y)=\left({-x+{\frac {y}{B}},-{\frac {B \left(
\lambda+1
 \right) ^{2}x}{\lambda}}+{\frac { \left({\lambda}^{2}+ \lambda+1
 \right) y}{\lambda}}}\right).
 $$
The linear change of variables $\Psi(x,y)=\left(x+y, \left(
\lambda+1 \right) Bx+ \left( 1+\frac{1}{\lambda} \right)By\right)$
gives a conjugation between $L_{B,\lambda}$ and its diagonal form
$L(x,y):=(\lambda x,y/\lambda)$. Using this conjugation we consider
the map
$$
F_{B,\lambda}(x,y):=\Psi\circ g_{B,\lambda}(x,y) \circ \Psi^{-1}(x,y),
$$
which satisfies $ DF_{B,\lambda}(0,0)=\mathrm{diag}\left(\lambda ,
1/\lambda\right)$. For simplicity, we omit its explicit expression. Recall
that $\lambda^p-1\ne0$ for $p=1,2,3$.

By  Theorem~\ref{aplgene}, when  $p\ge5$, from both conditions
$\mathcal{P}_i(F_{B,\lambda})=0, i=1,2,$ we obtain the same
periodicity condition $C_1(B,\lambda)=0$, where
\smallskip

\noindent\!\!$\begin{array}{rl} C_1(B,\lambda):=&
{B}^{6}{\lambda}^{10}+9{B}^{6}{\lambda}^{9}+35{B}^{6}{
\lambda}^{8}+80{B}^{6}{\lambda}^{7}+124{B}^{6}{\lambda}^{6}+2{B}
^{3}{\lambda}^{9}+142{B}^{6}{\lambda}^{5}+8{B}^{3}{\lambda}^{8}\\
{}&+ 124{B}^{6}{\lambda}^{4}+18{B}^{3}{\lambda}^{7}+80{B}^{6}{\lambda
}^{3}+32{B}^{3}{\lambda}^{6}+35{B}^{6}{\lambda}^{2}+40{B}^{3}{
\lambda}^{5}+9{B}^{6}\lambda\\
{}&+32{B}^{3}{\lambda}^{4}+{\lambda}^{7}+
{B}^{6}+18{B}^{3}{\lambda}^{3}+3{\lambda}^{6}+8{B}^{3}{\lambda}^
{2}+2{\lambda}^{5}+2{B}^{3}\lambda+3{\lambda}^{4}+{\lambda}^{3}.
\end{array}
 $
 \smallskip

 Using again Theorem \ref{aplgene},  we obtain
another polynomial restriction
$C_2(B,\lambda):=\mathcal{P}_3(F_{B,\lambda})=0$. The expression of
$C_2(B,\lambda)$ is given in Appendix~C. To study the periodicity of
$F_{B,\lambda}$ it suffices to deal with the two conditions $$
C_1(B,\lambda)=0,\quad C_2(B,\lambda)=0. $$ Computing
$R(\lambda):=\operatorname{Res}(C_1(B,\lambda),C_2(B,\lambda);B)$ we
get
\[
R(\lambda)={\lambda}^{36}  \left( \lambda-1 \right) ^{24} \left(
\lambda+1 \right) ^{72}\left( {\lambda}^{2}+1 \right)^6 \left(
{\lambda}^{2}+\lambda+1 \right) ^{24} S^6(\lambda)\,T^6(\lambda),
\]
where $S(\lambda)={\lambda}^{4}+{
\lambda}^{3}+{\lambda}^{2}+\lambda+1 $ and
$T(\lambda)=3{\lambda}^{4}+
15{\lambda}^{3}+20{\lambda}^{2}+15\lambda+3$. Then, a necessary
condition for $F_{B,\lambda}$ to be $p$-periodic with $p\ge4$ is
that  $\lambda$ is a primitive $p$-th of the unity and that either
$S(\lambda)=0$ or $T(\lambda)=0$. Let us discard the former
possibility.

It turns out that $T$ has two real roots and two complex roots of
modulus one. We have to prove that they are not  roots of the unity.
This can be seen, for instance,  proving that~$T$  is not divisible
by any cyclotomic polynomial. This holds because if it had a
cyclotomic polynomial divisor, its degree should be at most $4$. The
cyclotomic polynomials of degree at most $4$ correspond to $p\in
\{1,2,3,4,5,6,8,10,12\}:=\mathcal{D}_4$.  This is because these are
the cases which correspond to cyclotomic polynomials of degree
$\varphi(p)\leq 4$, being $\varphi$ the Euler's function, see for
instance~\cite{NZ}. Since
$$
\operatorname{Res}(T(\lambda),\lambda^p-1;\lambda)\ne0,\quad\mbox{for}\quad
p\in\mathcal{D}_4,
$$
the result follows.

Finally, when
 $S(\lambda)=0 $ notice that $\lambda$ is a primitive $5$-th root of the unity. So,
 by Corollary~\ref{pbaix} if $F_{B,\lambda}$ is $p$-periodic
 it should be $5$-periodic. Therefore it suffices to study whether  $F_{B,\lambda}^5=\operatorname{Id}$ or not, or
 equivalently whether   $G_{b,a}^5=\operatorname{Id}$.
Computing the numerator of the first component of
$G^5_{b,a}(x,y)-(x,y)$  we get that it writes as $a^4b(1-ab)x+O(2)$,
where as usual $O(m)$ denotes terms of degree at least $m$ in $x$
and $y$. Hence only three possibilities for $G_{b,a}$ to be
$5$-periodic appear: either  $a=0$ or $b=0$ or $ab=1$.

The first two cases can easily discarded. It holds that
$G_{0,a}^5\ne\operatorname{Id}$ and $G_{b,0}^5\ne\operatorname{Id}$.
On the other hand, when $b=1/a, a\ne0$ the numerator of the first
component of $G^5_{1/a,a}(x,y)-(x,y)$
 writes as $-a(a-1)^2(a^2+a+1)^2x^2y+O(3)$. Since this last function has to vanish we get three candidates to be
 $5$-periodic: $a=1$ and $a=(-1\pm i\sqrt{3})/2$ with $b=1/a=\overline a$. It is easy see that all them give rise to
  $5$-periodic maps $G_{b,a}$. The last two correspond to the globally $10$-periodic recurrence.
 \end{proof}

\begin{nota} The characterization of the globally periodic difference equations
treated in this section can also be obtained following the approach
developed in \cite{T} that gives all the periodic QRT-maps. This
result also appears in \cite[p. 165]{D} and \cite{JRV}.
\end{nota}

\section{The  third order Lyness recurrence}\label{r3}

We start proving a general result which will useful for solving
the periodicity problem for the Lyness recurrence.

\begin{propo}\label{propocondr3} Consider the smooth family of maps
$$F(x,y,z)=\left(\alpha x+\sum_{{\mathbf m}} f_{{\mathbf m}}
\bx^{\mathbf m},\beta y+\sum_{{\mathbf m}} g_{{\mathbf m}}
\bx^{\mathbf m},\gamma z+ \sum_{{\mathbf m}} h_{{\mathbf m}}
\bx^{\mathbf m}\right),$$ with ${\mathbf m}\in\{(i,j,k)$ such that
$i+j+k\geq 2\}$, and where
 $\bx^{\mathbf
m}=x^{i}y^{j}z^{k}$.
When $\alpha=\pm1$, $\beta\gamma=1,$ and $\beta\ne1,\gamma\ne1,$
some necessary conditions  for it to be periodic are
$$
f_{3,0,0}^{(2)}=f_{1,1,1}^{(2)}=g_{2,1,0}^{(2)}=g_{0,2,1}^{(2)}=h_{2,0,1}^{(2)}=h_{0,1,2}^{(2)}=0,
$$
where $f_{i,j,k}^{(2)}$ and $g_{i,j,k}^{(2)}$ are the expressions given in the second step
of the normal form procedure described in Section~\ref{norderpc}.
\end{propo}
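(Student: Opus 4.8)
The plan is to apply the machinery of Section~\ref{norderpc} directly to the three-dimensional map $F$ with diagonal linear part $L=\mathrm{diag}(\alpha,\beta,\gamma)$, under the hypotheses $\alpha=\pm1$, $\beta\gamma=1$, $\beta\neq1$, $\gamma\neq1$. By Proposition~\ref{novaproposiciometode}, a necessary condition for periodicity is that every resonant coefficient $f^{(n-1)}_{i;\mathbf{m}}$ vanishes, where the resonance condition reads $\lambda_i-\boldsymbol{\lambda}^{\mathbf{m}}=0$ with $\boldsymbol{\lambda}=(\alpha,\beta,\gamma)$. So the first task is purely combinatorial: identify, at order $n=3$ (i.e. after one step of the normal form procedure, yielding the superscript-$(2)$ coefficients), which monomials $\bx^{\mathbf m}=x^iy^jz^k$ with $i+j+k=3$ produce a resonance in each of the three components.

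First I would write down the resonance equations for each component. Using $\beta\gamma=1$ we have $\gamma=1/\beta$, so $\boldsymbol\lambda^{\mathbf m}=\alpha^i\beta^{j-k}$. The $x$-component ($\lambda_1=\alpha$) resonates when $\alpha=\alpha^i\beta^{j-k}$, i.e. $\alpha^{i-1}\beta^{j-k}=1$; the $y$-component ($\lambda_2=\beta$) when $\alpha^i\beta^{j-k-1}=1$; and the $z$-component ($\lambda_3=\gamma=1/\beta$) when $\alpha^i\beta^{j-k+1}=1$. Since $\alpha^2=1$ and $\beta$ is not a root of unity of the relevant small orders (one must check $\beta\neq\pm1$ and that no cancellation forces a spurious relation), the only way these hold is to make the $\beta$-exponent vanish and the $\alpha$-exponent even. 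For the $x$-component this selects, among degree-$3$ monomials, exactly $x^3$ (from $j=k$, $i-1$ even) and $xyz$ (here $j=k=1$, $i=1$), giving $f^{(2)}_{3,0,0}=0$ and $f^{(2)}_{1,1,1}=0$. For the $y$-component the condition $j-k=1$ together with $i$ even selects $x^2yz^0$-type monomials $x^2 y$ and $y^2z$, giving $g^{(2)}_{2,1,0}=0$ and $g^{(2)}_{0,2,1}=0$. Symmetrically, for the $z$-component the condition $j-k=-1$ selects $x^2z$ and $yz^2$, giving $h^{(2)}_{2,0,1}=0$ and $h^{(2)}_{0,1,2}=0$. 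These are precisely the six stated conditions.

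The remaining point is to justify that these are genuinely resonances for \emph{all} $p$ in the periodic case and that no extraneous degree-$3$ monomial also resonates (which would not invalidate the six listed conditions as necessary, but one should confirm the six are well-defined and the list is the intended one). Here I would invoke the Remark after Proposition~\ref{novaproposiciometode}: a $p$-periodic map with diagonal $L$ has $\lambda_i^p=1$ for all $i$, and under $\alpha=\pm1$, $\beta\gamma=1$ the resonance relations above are exactly the ones that persist independently of the precise value of $p$. The hypotheses $\beta\neq1$, $\gamma\neq1$ (equivalently $\beta\neq\pm1$ given $\beta\gamma=1$ and the setup) are what exclude additional coincidental resonances coming from powers of $\beta$ equalling $1$, so that the $\beta$-exponent really must vanish. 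Finally, by the definition of the normal form step, the numerators $f^{(2)}_{i;\mathbf m}$, $g^{(2)}_{i;\mathbf m}$, $h^{(2)}_{i;\mathbf m}$ are the coefficients obtained after applying $\Phi_2$ as in Section~\ref{norderpc}; setting them to zero is exactly the $n=3$ periodicity condition of Proposition~\ref{novaproposiciometode}, which completes the proof.

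The main obstacle I anticipate is the bookkeeping in the second paragraph: correctly enumerating the degree-$3$ monomials and verifying that the hypotheses $\alpha=\pm1$, $\beta\gamma=1$, $\beta\neq1$, $\gamma\neq1$ leave exactly these six resonant monomials and no others (in particular ruling out spurious solutions where a nonzero power of $\beta$ happens to equal $1$). This is elementary but must be done carefully, since the whole content of the proposition is the precise resonance list rather than any hard analytic estimate.
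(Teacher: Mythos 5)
Your proof is correct and follows essentially the same route as the paper: identify the degree-$3$ resonances forced by $\alpha^2=1$ and $\beta\gamma=1$ (namely $\alpha^3-\alpha$, $\alpha\beta\gamma-\alpha$, $\alpha^2\beta-\beta$, $\beta^2\gamma-\beta$, $\alpha^2\gamma-\gamma$, $\beta\gamma^2-\gamma$) and invoke Proposition~\ref{novaproposiciometode}. The one inaccuracy is your parenthetical claim that the hypotheses amount to $\beta\neq\pm1$ (they do not exclude $\beta=\gamma=-1$), but this only touches your optional exhaustiveness discussion, not the necessity of the six stated conditions, which is all the proposition asserts.
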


\begin{proof}
By inspection of the \emph{$3$rd order resonance conditions}, we
observe that when $\alpha=\pm 1$ and $\beta\gamma=1$ there appear
the resonances $\alpha^3-\alpha$,
$\alpha\beta\gamma-\alpha$,$\alpha^2\beta-\beta$,$\beta^2\gamma-\beta$,$\alpha^2\gamma-\gamma$
and $\beta^2\gamma-\gamma$ which are associated to the coefficients
$f_{3,0,0}^{(2)}$, $f_{1,1,1}^{(2)}$, $g_{2,1,0}^{(2)}$,
$g_{0,2,1}^{(2)}$, $h_{2,0,1}^{(2)}$ and $h_{0,2,1}^{(2)}$
respectively. So all them must vanish to have a periodic map.
\end{proof}

\begin{proof}[Proof of Proposition~\ref{lynes3d}]
The dynamics of  the third-order Lyness'  equation can be studied
through the Lyness maps
$$
G_{a}(x,y,z)=\left(y,z,\frac{a+y+z}{x}\right).
$$
It is easy to see that $G_{a}^p\ne\operatorname{Id}$ for $p=1,2$.
We  continue searching  $p$-periodic maps with $p\ge3.$
It has always  some fixed point $(x_0,x_0,x_0)$  with  $x_0^2-2x_0-a=0$ and $x_0\ne0$. Moreover the
 eigenvalues $\lambda$ of the Jacobian matrix at this points are given by the zeroes of
 $-(\lambda+1)(\lambda^2-(1+1/x_0)\lambda+1)=0$. These two equations suggest us to introduce the
 rational parametrization of $a$ as
 \[a=-{\frac {
\lambda\left(2\,{\lambda}^{2} -3\,\lambda+2 \right) }{
 \left( {\lambda}^{2}-\lambda+1 \right) ^{2}}}, \quad\mbox{with}\quad {\lambda}^{2}-\lambda+1 \ne0\quad\mbox{and}\quad \lambda\ne0,
 \]
which covers all values of $a\in\C.$  Then  the fixed point is $(x_0,x_0,x_0)$
with $x_0={\lambda}/(\lambda^2-\lambda+1)$ and the eigenvalues of
 $DG_a$ at this point are $-1,\lambda,1/\lambda$. Notice that since $p\ge3$, we can assume $\lambda\ne1$.
  To apply Proposition~\ref{propocondr3}
we perform the translation $(x,y,z)\to(x-x_0,y-x_0,z-x_0)$, which brings the
fixed point to the origin, obtaining
$$
g_{\lambda}(x,y,z):=\left(y,z,\frac{-\lambda x +\left(
{\lambda}^{2}-\lambda+1 \right) y+  \left( {\lambda}^{2}-\lambda+1
\right) z }{ \left( {\lambda}^{2}-\lambda+1 \right)
x+\lambda}\right),$$
 with linear part
$$L_{\lambda}(x,y)=\left(y,z,-x+{\frac {  \left( {\lambda}^{2}-\lambda+1 \right) y}{\lambda}}+{\frac {  \left( {\lambda}^{2}-\lambda+1 \right)
z}{\lambda}} \right).
 $$

The linear change of variables
$\Psi(x,y,z)=(x+y+{\lambda}^{2}z,-x+\lambda\,y+\lambda\,z,x+{\lambda}^{2}y+z)$
gives a conjugation between $L_{\lambda}$ and its diagonal form
$L(x,y,z):=(-x,\lambda y,z/\lambda)$. Using the conjugation $\Psi$, we finally
obtain a map with diagonal linear part $F_\lambda:=\Psi\circ g_{\lambda}\circ
\Psi^{-1}$, which is under the assumptions of Proposition~\ref{propocondr3}.

Applying this proposition and the Normal
Form Algorithm to $F_\lambda$ we can compute
$g_{2,1,0}^{(2)}$. From the equation
$g_{2,1,0}^{(2)}=0$  we obtain that
$$
\left( {\lambda}^{2}-\lambda+1 \right)^3(\lambda^4+1)=0.
$$
Thus $\lambda$ has to be a primitive $8$-th root of the unity.  All these values of $\lambda$ correspond
to the same value $a=1$, which gives a globally 8-periodic recurrence. So the result follows
\end{proof}

\section*{Acknowledgements} GSD-UAB and CoDALab Groups are supported by the
Government of Catalonia through the SGR program. The first and second authors are
also supported by MCYT through grants MTM2008-03437  and the third author by the
grant DPI2011-25822.


\subsection*{Appendix~A. Expression of  $\boldsymbol{\mathcal{P}_3(F)}$
when $\boldsymbol{\alpha\beta=1}$}

Consider the map (\ref{Fdeteo1}), applying the Normal Form Algorithm
one gets that the periodicity condition associated to
$f_{3,2}^{(4)}$ is given by\newline

 \noindent $\begin{array}{l}
\mathcal{P}_3(F):=f_{1,1}{g_{0,2}}{{g^2_{1,1}}}{\alpha}^{17}+( 2{{
g^2_{1,1}}}f_{1,1}{g_{0,2}}-f_{3,1}{g_{0,2}}-f_{1,1}{g_{1,1}}{
g_{1,2}}-f_{1,1}{g_{0,2}}{g_{2,1}} ) {\alpha}^{16}\end{array}$

\noindent $\begin{array}{l}{} {} + ( {
f_{3,2}}+3g^2_{1,1}f_{1,1}{g_{0,2}}+3{f_{1,1}} {g_{0,2}}{
f_{3,0}}-3f_{1,1}{g_{0,2}}{g_{2,1}}+2{f_{1,1}}f_{2,0}{g_{0,2}}{
g_{1,1}}+2f^2_{1,1}{g_{0,2}}{g_{2,0}}\end{array}$

\noindent $\begin{array}{l}{} {} -2f_{3,1}{g_{0,2}}+2
f_{2,1}f_{2,0}{g_{0,2}}+g^2_{1,1}{f_{1,2}}+2f_{2,2}{
g_{1,1}}+f_{1,1}{g_{2,2}}+2{f_{0,2}}f_{1,1}{g_{1,1}}{
g_{2,0}}-f_{1,1}{g_{1,1}}{g_{1,2}}\end{array}$

\noindent $\begin{array}{l}{} {} +g^2_{1,1}f^2_{1,1}
 ) {\alpha}^{15}+( 3g^2_{1,1}f_{1,1}{g_{0,2}}+6
f_{1,1}{g_{0,2}}{f_{3,0}}-6{f_{0,2}}f_{3,0}{g_{1,1}}-4
f_{1,2}f_{2,0}{g_{1,1}}-3{f_{1,1}}{g_{1,1}}{f_{2,1}}
\end{array}$

\noindent $\begin{array}{l} {} +2f_{1,2}{
g_{2,1}}-3{f_{1,2}}{f_{3,0}}+5g^2_{1,1}{f_{1,2}}+2{{
g^2_{1,1}}}f^2_{1,1}+2{f_{0,2}}f_{1,1}{g_{1,1}}{g_{2,0}}-2
f_{2,2}{f_{2,0}}-2f^2_{1,1}{g_{2,1}}
\end{array}$

\noindent $\begin{array}{l} {} -3f_{3,1}{g_{0,2}}-2{ f_{1,1}}
g^2_{0,2}{g_{2,0}}+3f_{1,1}f_{2,0}{g_{0,2}}{
g_{1,1}}+4f_{2,1}f_{2,0}{g_{0,2}}-3f_{0,2}f_{1,1}{g_{3,0}}-2{
f_{1,2}}{g_{0,2}}{g_{2,0}}\end{array}$

\noindent $\begin{array}{l}{} {} +4f_{2,2}{g_{1,1}}+{f_{3,2}}+2{{
g^3_{1,1}}}{f_{0,2}}+2f_{0,2}{g_{1,1}}{g_{2,1}}-4 f_{1,1}{{
f^2_{2,0}}}{g_{0,2}}-4f_{1,1}{g_{0,2}}{g_{2,1}}+6f^2_{1,1}{
g_{0,2}}{g_{2,0}}\end{array}$

\noindent $\begin{array}{l}{} {}
 -3f_{1,1}{f_{3,1}}-2
f_{1,1}f_{2,0}{g_{1,2}}-2
f_{1,1}f_{1,2}{g_{2,0}}-4f_{4,0}{f_{0,2}}-2g^2_{1,1}f_{0,2}{
f_{2,0}}+2f_{1,1}{g_{2,2}} ) {\alpha}^{14}
\end{array}$

\noindent $\begin{array}{l} {} +( 4f_{1,1}{
f_{2,0}}{f_{2,1}}+4g^2_{1,1}f_{1,1}{g_{0,2}}+11f_{1,1}{
g_{0,2}}{f_{3,0}}-12f_{0,2}f_{3,0}{g_{1,1}}+6f^2_{1,1}{
f_{2,0}}{g_{1,1}}\end{array}$

\noindent $\begin{array}{l}
{}-10f_{1,2}f_{2,0}{g_{1,1}}-8f_{1,1}{g_{1,1}}{
f_{2,1}}+2f_{1,2}{g_{2,1}}-3f_{1,2}{f_{3,0}}+4f_{1,2}{{
f^2_{2,0}}}+3f^3_{1,1}{g_{2,0}}+10g^2_{1,1}{ f_{1,2}}\end{array}$

\noindent $\begin{array}{l}
{}+g^2_{1,1}f^2_{1,1}+6f^2_{1,1}{f_{3,0}}-4
f_{0,2}f_{1,1}{g_{1,1}}{g_{2,0}}-2{f_{2,2}} {f_{2,0}}-7{{
f^2_{1,1}}}{g_{2,1}}+12f_{3,0}f_{2,0}{ f_{0,2}}
\end{array}$

\noindent $\begin{array}{l} {} +4f_{0,2}f_{2,0}{
g_{0,2}}{g_{2,0}}-f_{3,1}{g_{0,2}}-2f_{0,2}{g_{0,2}}{g_{3,0}}-6
f_{1,1}g^2_{0,2}{g_{2,0}}+8{f_{0,2}}f_{1,1}f_{2,0}{g_{2,0}}+6
{f_{2,1}}f_{2,0}{g_{0,2}}\end{array}$

\noindent $\begin{array}{l}{} {}
-6{f_{0,2}}f_{1,1}{g_{3,0}}-4{f_{1,2}}{
g_{0,2}}{g_{2,0}}+6{f_{2,2}}{g_{1,1}}+{f_{3,2}}+6g^3_{1,1}{
f_{0,2}}+3f_{1,1}{g_{0,3}}{g_{2,0}}+6f_{0,2}{g_{1,1}}{g_{2,1}}\end{array}$

\noindent $\begin{array}{l}{} {} +6
f_{0,3}{g_{1,1}}{g_{2,0}}-8f_{1,1}{{f^2_{2,0}}}{g_{0,2}}-3{
f_{1,1}}{g_{0,2}}{g_{2,1}}+5{f_{1,1}} {g_{1,1}}{g_{1,2}}+8f_{0,2}{{
f^2_{2,0}}}{g_{1,1}}+11{{ f^2_{1,1}}}{g_{0,2}}{g_{2,0}}\end{array}$

\noindent $\begin{array}{l}{} {}-5 f_{1,1}{f_{3,1}}-5{f_{1,1}}
f_{2,0}{g_{1,2}}+3f_{1,3}{g_{2,0}}-8
f_{1,1}{f_{1,2}}{g_{2,0}}-4f_{4,0}{f_{0,2}}-12{{
g^2_{1,1}}}f_{0,2}{f_{2,0}}+2f_{0,2}{g_{3,1}}\end{array}$

\noindent $\begin{array}{l}{} {}-4f_{0,2}f_{2,0}{ g_{2,1}}+3
f_{1,1}{g_{2,2}} ) {\alpha}^{13}+( 6f_{1,1}{
f_{2,0}}{f_{2,1}}+9g^2_{1,1}f_{1,1}{g_{0,2}}+7{f_{1,1}}{
g_{0,2}}{f_{3,0}}-24f_{0,2}f_{3,0}{g_{1,1}}
\end{array}$

\noindent $\begin{array}{l} {}+19{{
f^2_{1,1}}}f_{2,0}{g_{1,1}}-16f_{1,2}f_{2,0}{g_{1,1}}-15f_{1,1}{
g_{1,1}}{f_{2,1}}+4f_{1,2}{g_{2,1}}-5f_{1,2}{f_{3,0}}-8f_{0,2}{{
f^3_{2,0}}}+4f_{1,2}{{f^2_{2,0}}}\end{array}$

\noindent $\begin{array}{l}{} {} +12f^3_{1,1}{g_{2,0}}+15
g^2_{1,1}{f_{1,2}}-5g^2_{1,1}f^2_{1,1}+12{{
f^2_{1,1}}}{f_{3,0}}+3f_{0,3}{g_{3,0}}-33f_{0,2}f_{1,1}{g_{1,1}}{
g_{2,0}}-2f_{2,2}{f_{2,0}}
\end{array}$

\noindent $\begin{array}{l} {} -13f^2_{1,1}{g_{2,1}} +12
f_{3,0}f_{2,0}{f_{0,2}}+12{f_{0,2}}f_{2,0}{g_{0,2}}{g_{2,0}}+2
f_{3,1}{g_{0,2}}-4f_{0,2}{g_{0,2}}{g_{3,0}}-8f_{1,1}g^2_{0,2}{g_{2,0}}\end{array}$

\noindent $\begin{array}{l}{}
 {}-6f_{0,2}f_{2,1}{g_{2,0}}-8f_{1,1}g^2_{0,2}{g_{2,0}}-6f_{0,2}f_{2,1}{g_{2,0}}+26f_{0,2}f_{1,1}f_{2,0}{
g_{2,0}}-14f_{1,1}f_{2,0}{g_{0,2}}{g_{1,1}}\end{array}$

\noindent $\begin{array}{l}{} {}+2f_{2,1}f_{2,0}{
g_{0,2}}-15f_{0,2}f_{1,1}{g_{3,0}}-2{f_{1,2}}{g_{0,2}}{g_{2,0}}-8f_{1,1}g^2_{0,2}{g_{2,0}}-6f_{0,2}f_{2,1}{g_{2,0}}+2f_{2,1}f_{2,0}{
g_{0,2}}
\end{array}$

\noindent $\begin{array}{l} {} +26f_{0,2}f_{1,1}f_{2,0}{
g_{2,0}}-14f_{1,1}f_{2,0}{g_{0,2}}{g_{1,1}}-15f_{0,2}f_{1,1}{g_{3,0}}-2{f_{1,2}}{g_{0,2}}{g_{2,0}}+2
f_{2,2}{g_{1,1}}-2{f_{3,2}}\end{array}$

\noindent $\begin{array}{l}{} {} +16g^3_{1,1}{f_{0,2}}+6f_{1,1}{
g_{0,3}}{g_{2,0}}+14{f_{0,2}}{g_{1,1}}{g_{2,1}}+12f_{0,3}{g_{1,1}}{
g_{2,0}}-14f_{1,1}{{f^2_{2,0}}}{g_{0,2}}+3f_{1,1}{g_{0,2}}{
g_{2,1}}\end{array}$

\noindent $\begin{array}{l}{}
{}+9f_{1,1}{g_{1,1}}{g_{1,2}}+20f_{0,2}{{f^2_{2,0}}}{
g_{1,1}}-6f^2_{1,1}{{f^2_{2,0}}}+10f^2_{1,1}{g_{0,2}}
{g_{2,0}}-7f_{1,1}{f_{3,1}}-9f_{1,1}{f_{2,0}}{g_{1,2}}
\end{array}$

\noindent $\begin{array}{l} {} +3f_{1,3}{
g_{2,0}}-19f_{1,1}{f_{1,2}}{g_{2,0}}-6f_{0,3}f_{2,0}{g_{2,0}}-4
f_{4,0}{f_{0,2}}-30g^2_{1,1}f_{0,2}{f_{2,0}}+2f_{0,2}{
g_{3,1}}\end{array}$

\noindent $\begin{array}{l}{}
{}-8{f_{0,2}}{f_{2,0}}{g_{2,1}}+4f_{0,2}{g_{0,2}}{g_{1,1}}{
g_{2,0}}+f_{1,1}{g_{2,2}} ) {\alpha}^{12}+( 8
f_{1,1}f_{2,0}{f_{2,1}}+15g^2_{1,1}f_{1,1}{g_{0,2}}\end{array}$

\noindent $\begin{array}{l}{} {} +2f_{1,1}{
g_{0,2}}{f_{3,0}}-18f_{0,2}f_{3,0}{g_{1,1}}+40{{
f^2_{1,1}}}f_{2,0}{g_{1,1}}-12f_{1,2}f_{2,0}{g_{1,1}}-15f_{1,1}{
g_{1,1}}{f_{2,1}}-2{f_{1,2}}{g_{2,1}}
\end{array}$

\noindent $\begin{array}{l} {}+4{f_{1,2}}{f_{3,0}} -8
f_{0,2}{{f^3_{2,0}}}+6f_{1,2}{{f^2_{2,0}}}+29f^3_{1,1}{
g_{2,0}}+11g^2_{1,1}{f_{1,2}}-16g^2_{1,1}{{
f^2_{1,1}}}+21f^2_{1,1}{f_{3,0}}\end{array}$

\noindent $\begin{array}{l}{} {}+3f_{0,3}{g_{3,0}}-74
f_{0,2}f_{1,1}{g_{1,1}}{g_{2,0}}+4{f_{2,2}}{f_{2,0}}-16{{
f^2_{1,1}}}{g_{2,1}}+16f_{3,0}f_{2,0}{f_{0,2}}+12f_{0,2}f_{2,0}{
g_{0,2}}{g_{2,0}}\end{array}$

\noindent $\begin{array}{l}{}
{}+5{f_{3,1}}{g_{0,2}}-6{f_{0,2}}{g_{0,2}}{
g_{3,0}}-8f_{1,1}g^2_{0,2}{g_{2,0}}-6{f_{0,2}}f_{2,1}{
g_{2,0}}+70f_{0,2}f_{1,1}f_{2,0}{g_{2,0}}\end{array}$

\noindent $\begin{array}{l}{} {}-35f_{1,1}f_{2,0}{g_{0,2}}
{g_{1,1}}-4f_{2,1}f_{2,0}{g_{0,2}}-19f_{0,2}{f_{1,1}}{g_{3,0}}-4{
f_{2,2}}{g_{1,1}}+4{g_{1,2}}{f_{0,2}}{g_{2,0}}-3{f_{3,2}}\end{array}$

\noindent $\begin{array}{l}{} {}+24{{
g^3_{1,1}}}{f_{0,2}}+9f_{1,1}{g_{0,3}}{g_{2,0}}+14f_{0,2}{
g_{1,1}}{g_{2,1}}+18f_{0,3}{g_{1,1}}{g_{2,0}}-8f_{1,1}{{
f^2_{2,0}}}{g_{0,2}}+6{f_{1,1}}{g_{0,2}}{g_{2,1}}\end{array}$

\noindent $\begin{array}{l} {}+10f_{1,1}{
g_{1,1}}{g_{1,2}}+40{f_{0,2}}{{f^2_{2,0}}}{g_{1,1}}-10{{
f^2_{1,1}}}{{f^2_{2,0}}}-3f^2_{1,1}{g_{0,2}}{g_{2,0}}-8
f_{1,1}f_{2,0}{g_{1,2}}+3f_{1,3}{g_{2,0}}\end{array}$

\noindent $\begin{array}{l}{} {}-25f_{1,1}{f_{1,2}}{
g_{2,0}}-12f_{0,3}f_{2,0}{g_{2,0}}+4f_{4,0}{f_{0,2}}-54{{
g^2_{1,1}}}f_{0,2}{f_{2,0}}+2f_{0,2}{g_{3,1}}-12f_{0,2}f_{2,0}{
g_{2,1}}\end{array}$

\noindent
$\begin{array}{l}{}+10f_{0,2}{g_{0,2}}{g_{1,1}}{g_{2,0}}-2{f_{1,1}}{g_{2,2}}
 ) {\alpha}^{11}+( -2f_{1,1}f_{2,0}{f_{2,1}}+22{{
g^2_{1,1}}}f_{1,1}{g_{0,2}}-9f_{1,1}{g_{0,2}}{f_{3,0}}\end{array}$

\noindent $\begin{array}{l} {}-10 f_{0,2}f_{3,0}{g_{1,1}}
+54f^2_{1,1}f_{2,0}{g_{1,1}}+2
f_{1,2}f_{2,0}{g_{1,1}}-8{f_{1,1}}{g_{1,1}}{f_{2,1}}-2f_{1,2}{
g_{2,1}}+5f_{1,2}{f_{3,0}}\end{array}$

\noindent $\begin{array}{l}{} -12f_{0,2}{{f^3_{2,0}}}-6f_{1,2}{{
f^2_{2,0}}}+46f^3_{1,1}{g_{2,0}}+4g^2_{1,1}{
f_{1,2}}-25g^2_{1,1}f^2_{1,1}+13f^2_{1,1}{
f_{3,0}}+3f_{0,3}{g_{3,0}}
\end{array}$

\noindent $\begin{array}{l} {}
-115f_{0,2}f_{1,1}{g_{1,1}}{g_{2,0}}+6
f_{2,2}{f_{2,0}}-9f^2_{1,1}{g_{2,1}}-8f_{3,0}f_{2,0}{
f_{0,2}}+4{f_{0,2}}f_{2,0}{g_{0,2}}{g_{2,0}}+5f_{3,1}{g_{0,2}}\end{array}$

\noindent $\begin{array}{l}{}+2{
f_{0,2}}{g_{0,2}}{g_{3,0}}-6f_{0,2}f_{2,1}{g_{2,0}}+98{f_{0,2}}
f_{1,1}f_{2,0}{g_{2,0}}-52f_{1,1}f_{2,0}{g_{0,2}}{g_{1,1}}-10
f_{2,1}f_{2,0}{g_{0,2}}
\end{array}$

\noindent $\begin{array}{l}{}
-17f_{0,2}f_{1,1}{g_{3,0}}+8f_{1,2}{g_{0,2}}
{g_{2,0}}-10{f_{2,2}}{g_{1,1}}+4{g_{1,2}}{f_{0,2}}{g_{2,0}}-3{
f_{3,2}}+32g^3_{1,1}{f_{0,2}}\end{array}$

\noindent $\begin{array}{l}{}+6f_{1,1}{g_{0,3}}{g_{2,0}}+12{
f_{0,2}}{g_{1,1}}{g_{2,1}}-8f^2_{0,2}g^2_{2,0}+15{
f_{0,3}}{g_{1,1}}{g_{2,0}}+9{f_{1,1}}{g_{0,2}}{g_{2,1}}
\end{array}$

\noindent $\begin{array}{l}{} +5{f_{1,1}}{
g_{1,1}}{g_{1,2}}+40{f_{0,2}}{{f^2_{2,0}}}{g_{1,1}}-16{{
f^2_{1,1}}}{{f^2_{2,0}}}-20f^2_{1,1}{g_{0,2}}{g_{2,0}}+7
f_{1,1}{f_{3,1}}-3{f_{1,1}}f_{2,0}{g_{1,2}}
\end{array}$

\noindent $\begin{array}{l}{} -3f_{1,3}{g_{2,0}}-19
f_{1,1}f_{1,2}{g_{2,0}}-12f_{0,3}f_{2,0}{g_{2,0}}+8{f_{4,0}}{
f_{0,2}}-62g^2_{1,1}f_{0,2}{f_{2,0}}-2f_{0,2}{g_{3,1}}\end{array}$

\noindent $\begin{array}{l}{} -4{
f_{0,2}}f_{2,0}{g_{2,1}}+24f_{0,2}{g_{0,2}}{g_{1,1}}{g_{2,0}}-5{
f_{1,1}}{g_{2,2}}  ) {\alpha}^{10}+( -10f_{1,1}f_{2,0}{ f_{2,1}}+21
g^2_{1,1}f_{1,1}{g_{0,2}}\end{array}$

\noindent $\begin{array}{l}{} -11f_{1,1}{g_{0,2}}{
f_{3,0}}+14f_{0,2}f_{3,0}{g_{1,1}}+53f^2_{1,1}{f_{2,0}} {
g_{1,1}}+16f_{1,2}f_{2,0}{g_{1,1}}+5f_{1,1}{g_{1,1}}{f_{2,1}}\end{array}$

\noindent $\begin{array}{l}{} -8
f_{1,2}{g_{2,1}}+11f_{1,2}{f_{3,0}}+4{f_{0,2}}{{f^3_{2,0}}}-8
f_{1,2}{{f^2_{2,0}}}+53f^3_{1,1}{g_{2,0}}-9g^2_{1,1}{
f_{1,2}}-29g^2_{1,1}f^2_{1,1}
\end{array}$

\noindent $\begin{array}{l}{} +2f^2_{1,1}{
f_{3,0}}-6f_{0,3}{g_{3,0}}-130{f_{0,2}} f_{1,1}{g_{1,1}}{g_{2,0}}+6
f_{2,2}{f_{2,0}}+f^2_{1,1}{g_{2,1}}-16f_{3,0}f_{2,0}{ f_{0,2}}
\end{array}$

\noindent $\begin{array}{l}{}
-24f_{0,2}{f_{2,0}}{g_{0,2}}{g_{2,0}}+2f_{3,1}{g_{0,2}}+6
f_{0,2}{g_{0,2}}{g_{3,0}}+4f_{1,1}g^2_{0,2}{g_{2,0}}+98
f_{0,2}{f_{1,1}}f_{2,0}{g_{2,0}}
\end{array}$

\noindent $\begin{array}{l}{}-56f_{1,1}f_{2,0}{g_{0,2}}{ g_{1,1}}
-10{f_{2,1}}f_{2,0}{g_{0,2}}-3f_{0,2}f_{1,1}{g_{3,0}}+4
f_{1,2}{g_{0,2}}{g_{2,0}}-10f_{2,2}{g_{1,1}}\end{array}$

\noindent $\begin{array}{l}{} {}+4{g_{1,2}}f_{0,2}{
g_{2,0}}+28g^3_{1,1}{f_{0,2}}-4{f_{0,2}}{g_{1,1}}{g_{2,1}}-8
f^2_{0,2}g^2_{2,0}+3{f_{0,3}}{g_{1,1}}{g_{2,0}}+14
f_{1,1}{{f^2_{2,0}}}{g_{0,2}}\end{array}$

\noindent $\begin{array}{l}{}+3f_{1,1}{g_{0,2}}{g_{2,1}}-3
f_{1,1}{g_{1,1}}{g_{1,2}}+28f_{0,2}{{f^2_{2,0}}}{g_{1,1}}-4{{
f^2_{1,1}}}{{f^2_{2,0}}}-38f^2_{1,1}{g_{0,2}}{g_{2,0}}+13
f_{1,1}{f_{3,1}}\end{array}$

\noindent $\begin{array}{l}{}
{}+5f_{1,1}f_{2,0}{g_{1,2}}-6f_{1,3}{g_{2,0}}-f_{1,1}
f_{1,2}{g_{2,0}}+8f_{4,0}{f_{0,2}}-56g^2_{1,1}f_{0,2}{
f_{2,0}}-4f_{0,2}{g_{3,1}}\end{array}$

\noindent $\begin{array}{l}{} +4f_{0,2}f_{2,0}{g_{2,1}}+26f_{0,2}{
g_{0,2}}{g_{1,1}}{g_{2,0}}-5f_{1,1}{g_{2,2}} ) {\alpha}^{9}+ (
-16f_{1,1}f_{2,0}{f_{2,1}}+17g^2_{1,1}f_{1,1}{ g_{0,2}}\end{array}$

\noindent $\begin{array}{l}{}
{}-10f_{1,1}{g_{0,2}}{f_{3,0}}+22{f_{0,2}}f_{3,0}{g_{1,1}}+36
f^2_{1,1}{f_{2,0}}{g_{1,1}}+24{f_{1,2}}f_{2,0}{g_{1,1}}+15
f_{1,1}{g_{1,1}}{f_{2,1}}\end{array}$

\noindent $\begin{array}{l}{}-2f_{1,2}{g_{2,1}}+2f_{1,2}f_{3,0}+8
f_{0,2}{{f^3_{2,0}}}-14f_{1,2}{{f^2_{2,0}}}+45f^3_{1,1}{
g_{2,0}}-13g^2_{1,1}{f_{1,2}}-22g^2_{1,1}{{ f^2_{1,1}}}\end{array}$

\noindent $\begin{array}{l}{}
{}-17f^2_{1,1}{f_{3,0}}-6{f_{0,3}}{g_{3,0}}-103
f_{0,2}f_{1,1}{g_{1,1}}{g_{2,0}}+14f^2_{1,1}{g_{2,1}}-24
f_{3,0}f_{2,0}{f_{0,2}}-f_{3,1}{ g_{0,2}}\end{array}$

\noindent $\begin{array}{l}{}-28f_{0,2}f_{2,0}{g_{0,2}}{g_{2,0}}
+10f_{0,2}{g_{0,2}}{g_{3,0}}+10f_{1,1}g^2_{0,2}{
g_{2,0}}+6f_{0,2}f_{2,1}{g_{2,0}}+58f_{0,2}f_{1,1}f_{2,0}{
g_{2,0}}\end{array}$

\noindent
$\begin{array}{l}{}-41{f_{1,1}}f_{2,0}{g_{0,2}}{g_{1,1}}-4f_{2,1}f_{2,0}{
g_{0,2}}+16f_{0,2}f_{1,1}{g_{3,0}}+6{f_{1,2}}{g_{0,2}}{g_{2,0}}-4
f_{2,2}{g_{1,1}}+3{f_{3,2}}
\end{array}$

\noindent $\begin{array}{l}{}+22g^3_{1,1}{f_{0,2}}-6f_{1,1}{
g_{0,3}}{g_{2,0}}-10f_{0,2}{g_{1,1}}{g_{2,1}}-8f^2_{0,2}{{
g^2_{2,0}}}-9f_{0,3}{g_{1,1}}{g_{2,0}}+16f_{1,1}{{f^2_{2,0}}}{
g_{0,2}}\end{array}$

\noindent
$\begin{array}{l}{}-8f_{1,1}{g_{1,1}}{g_{1,2}}-4f_{0,2}{{f^2_{2,0}}}{
g_{1,1}}+6f^2_{1,1}{{f^2_{2,0}}}-36f^2_{1,1}{g_{0,2}}
{g_{2,0}}+10{f_{1,1}}{f_{3,1}}+10f_{1,1}f_{2,0}{g_{1,2}}\end{array}$

\noindent $\begin{array}{l}{}-6f_{1,3}{
g_{2,0}}+20f_{1,1}f_{1,2}{g_{2,0}}+18f_{0,3}f_{2,0}{g_{2,0}}+4
f_{4,0}{f_{0,2}}-26g^2_{1,1}f_{0,2}{ f_{2,0}}-4f_{0,2}{
g_{3,1}}\end{array}$

\noindent
$\begin{array}{l}{}+16{f_{0,2}}f_{2,0}{g_{2,1}}+36f_{0,2}{g_{0,2}}{g_{1,1}}{
g_{2,0}}-2f_{1,1}{g_{2,2}}  ) {\alpha}^{8}+( -10
f_{1,1}f_{2,0}{f_{2,1}}+8g^2_{1,1}f_{1,1}{g_{0,2}}\end{array}$

\noindent $\begin{array}{l}{} {}-3f_{1,1}{
g_{0,2}}f_{3,0}+24f_{0,2}f_{3,0}{g_{1,1}}+12f^2_{1,1}f_{2,0}{
g_{1,1}}+14{f_{1,2}}f_{2,0}{g_{1,1}}+16f_{1,1}{g_{1,1}}{f_{2,1}}\end{array}$

\noindent $\begin{array}{l}{}-2{
f_{1,2}}{g_{2,1}}-f_{1,2}{f_{3,0}}+16f_{0,2}{{f^3_{2,0}}}-2{
f_{1,2}}{{f^2_{2,0}}}+25f^3_{1,1}{g_{2,0}}-14{{
g^2_{1,1}}}{f_{1,2}}-13g^2_{1,1}f^2_{1,1}\end{array}$

\noindent $\begin{array}{l}{}-21{{
f^2_{1,1}}}{f_{3,0}}-6f_{0,3}{g_{3,0}}-62{f_{0,2}} f_{1,1}{
g_{1,1}}{g_{2,0}}-6f_{2,2}{f_{2,0}}+15f^2_{1,1}{g_{2,1}}-12
f_{3,0}f_{2,0}{f_{0,2}}\end{array}$

\noindent $\begin{array}{l}{} {}
-28f_{0,2}{f_{2,0}}{g_{0,2}}{g_{2,0}}-3
f_{3,1}{g_{0,2}}+6{f_{1,1}}g^2_{0,2}{g_{2,0}}+6
f_{0,2}f_{2,1}{g_{2,0}}-22{f_{1,1}}{f_{2,0}}{g_{0,2}}{g_{1,1}}\end{array}$

\noindent $\begin{array}{l}{} +2
f_{2,1}f_{2,0}{g_{0,2}}+21{f_{0,2}}f_{1,1}{g_{3,0}}-4f_{1,2}{
g_{0,2}}{g_{2,0}}+2f_{2,2}{g_{1,1}}-4{g_{1,2}}f_{0,2}{g_{2,0}}+3{
f_{3,2}}\end{array}$

\noindent $\begin{array}{l}{}
{}+10g^3_{1,1}{f_{0,2}}-9f_{1,1}{g_{0,3}}{g_{2,0}}-18
f_{0,2}{g_{1,1}}{g_{2,1}}-8f^2_{0,2}g^2_{2,0}-21
f_{0,3}{g_{1,1}}{g_{2,0}}+12f_{1,1}{{f^2_{2,0}}}{g_{0,2}}\end{array}$

\noindent $\begin{array}{l}{}-5
f_{1,1}{g_{0,2}}{g_{2,1}}-9f_{1,1}{g_{1,1}}{g_{1,2}}-24f_{0,2}{{
f^2_{2,0}}}{g_{1,1}}+20{f_{1,1}}^{2}{{f^2_{2,0}}}-29{{
f^2_{1,1}}}{g_{0,2}}{g_{2,0}}+f_{1,1}{f_{3,1}}\end{array}$

\noindent $\begin{array}{l}{}+9f_{1,1}f_{2,0}{
g_{1,2}}-3f_{1,3}{g_{2,0}}+26f_{1,1}f_{1,2}{g_{2,0}}+18
f_{0,3}f_{2,0}{g_{2,0}}-4f_{4,0}{f_{0,2}}-4g^2_{1,1}f_{0,2}{
f_{2,0}}\end{array}$

\noindent $\begin{array}{l}{}
-2{f_{0,2}}{g_{3,1}}+12f_{0,2}f_{2,0}{g_{2,1}}+22f_{0,2}{
g_{0,2}}{g_{1,1}}{g_{2,0}}+{f_{1,1}}{g_{2,2}} ) {\alpha}^{7}+ (
2f_{1,1}f_{2,0}{f_{2,1}}\end{array}$

 \noindent $\begin{array}{l}{}
{}+3g^2_{1,1}f_{1,1}{
g_{0,2}}+f_{1,1}{g_{0,2}}{f_{3,0}}+12f_{0,2}f_{3,0}{g_{1,1}}-{{
f^2_{1,1}}}f_{2,0}{g_{1,1}}+4{f_{1,2}}f_{2,0}{g_{1,1}}\end{array}$

\noindent $\begin{array}{l}{}+11f_{1,1}{
g_{1,1}}{f_{2,1}}+4f_{1,2}{g_{2,1}}-7f_{1,2}{f_{3,0}}+8f_{0,2}{{
f^3_{2,0}}}+4f_{1,2}{{f^2_{2,0}}}+8f^3_{1,1}{g_{2,0}}-7{{
g^2_{1,1}}}{f_{1,2}}\end{array}$

\noindent $\begin{array}{l}{} {}-3g^2_{1,1}f^2_{1,1}-16{{
f^2_{1,1}}}{f_{3,0}}+3{f_{0,3}}{g_{3,0}}-14f_{0,2}{f_{1,1}}{
g_{1,1}}{g_{2,0}}-6f_{2,2}{f_{2,0}}+13f^2_{1,1}{g_{2,1}}\end{array}$

\noindent $\begin{array}{l}{}+4{ f_{3,0}}f_{2,0}{f_{0,2}}
-2f_{3,1}{g_{0,2}}-2f_{0,2}{g_{0,2}}{
g_{3,0}}+4f_{1,1}g^2_{0,2}{g_{2,0}}+6{f_{0,2}}f_{2,1}{
g_{2,0}}\end{array}$

\noindent $\begin{array}{l}{}
{}-22{f_{0,2}}f_{1,1}f_{2,0}{g_{2,0}}+6{f_{2,1}}f_{2,0}{g_{0,2}}+20f_{0,2}f_{1,1}{
g_{3,0}}-2f_{1,2}{g_{0,2}}{g_{2,0}}+6f_{2,2}{g_{1,1}}
\end{array}$

\noindent $\begin{array}{l}{}
+2{f_{3,2}}+4g^3_{1,1}{f_{0,2}}-6f_{1,1}{
g_{0,3}}{g_{2,0}}-10f_{0,2}{g_{1,1}}{g_{2,1}}-15f_{0,3}{g_{1,1}}{
g_{2,0}}+2f_{1,1}{{f^2_{2,0}}}{g_{0,2}}
\end{array}$

\noindent $\begin{array}{l}{}-4f_{1,1}f_{2,0}{
g_{0,2}}{g_{1,1}}-4{g_{1,2}} f_{0,2}{g_{2,0}}-3f_{1,1}{g_{0,2}}{
g_{2,1}}-5{f_{1,1}}{g_{1,1}}{g_{1,2}}-28f_{0,2}{{f^2_{2,0}}}{
g_{1,1}}\end{array}$

\noindent $\begin{array}{l}{}
{}+16f^2_{1,1}{{f^2_{2,0}}}-10f^2_{1,1}{
g_{0,2}}{g_{2,0}}-5f_{1,1}{f_{3,1}}+5f_{1,1}{ f_{2,0}}{g_{1,2}}+3{
f_{1,3}}{g_{2,0}}+23f_{1,1}f_{1,2}{g_{2,0}}\end{array}$

\noindent $\begin{array}{l}{} +12f_{0,3}f_{2,0}{
g_{2,0}}-4f_{4,0}{f_{0,2}}+14g^2_{1,1}{f_{0,2}}{f_{2,0}}+2
f_{0,2}{g_{3,1}}+8{f_{0,2}}f_{2,0}{g_{2,1}}+3f_{1,1}{g_{2,2}}\end{array}$

\noindent $\begin{array}{l}{} {}+16{f_{0,2}}{g_{0,2}}{
g_{1,1}}{g_{2,0}} ) {\alpha}^{6} +( 8
f_{1,1}f_{2,0}{f_{2,1}}+2f_{1,1}{g_{0,2}}{f_{3,0}}+2f_{0,2}f_{3,0}{
g_{1,1}}-6f^2_{1,1}f_{2,0}{g_{1,1}}\end{array}$

\noindent $\begin{array}{l}{} -8{f_{1,2}}f_{2,0}{
g_{1,1}}+3f_{1,1}{g_{1,1}}{f_{2,1}}+2{f_{1,2}}{g_{2,1}}-4f_{1,2}{
f_{3,0}}+10f_{1,2}{{f^2_{2,0}}}-3f^3_{1,1}{g_{2,0}}\end{array}$

\noindent $\begin{array}{l}{} -3{{
g^2_{1,1}}}{f_{1,2}}-5f^2_{1,1}{f_{3,0}}+3f_{0,3}{
g_{3,0}}-4f_{2,2}{f_{2,0}}+4f^2_{1,1}{g_{2,1}}+8
f_{3,0}f_{2,0}{f_{0,2}}-f_{3,1}{g_{0,2}}\end{array}$

\noindent $\begin{array}{l}{} +4{f_{0,2}}f_{2,0}{g_{0,2}}{ g_{2,0}}
-4{f_{0,2}}{g_{0,2}}{g_{3,0}}-30
f_{0,2}f_{1,1}f_{2,0}{g_{2,0}}+f_{1,1}f_{2,0}{g_{0,2}}{g_{1,1}}+4
f_{2,1}f_{2,0}{g_{0,2}}\end{array}$

\noindent $\begin{array}{l}{} +6{f_{0,2}}f_{1,1}{g_{3,0}}-4f_{1,2}{
g_{0,2}}{g_{2,0}}+4f_{2,2}{g_{1,1}}-4{g_{1,2}}f_{0,2}{g_{2,0}}-{
f_{3,2}}-3f_{1,1}{g_{0,3}}{g_{2,0}}\end{array}$

\noindent $\begin{array}{l}{}-6f_{0,2}{g_{1,1}}{g_{2,1}}-9
f_{0,3}{g_{1,1}}{g_{2,0}}-4f_{1,1}{{f^2_{2,0}}}{g_{0,2}}-2{
f_{1,1}}{g_{0,2}}{g_{2,1}}-2f_{1,1}{g_{1,1}}{g_{1,2}}+6f^2_{1,1}{{f^2_{2,0}}}\end{array}$

\noindent $\begin{array}{l}{}-16f_{0,2}{{ f^2_{2,0}}}{g_{1,1}} -3{{
f^2_{1,1}}}{g_{0,2}}{g_{2,0}}-8f_{1,1}{f_{3,1}}+3f_{1,3}{
g_{2,0}}+7f_{1,1}{f_{1,2}}{g_{2,0}}-6f_{0,3}f_{2,0}{g_{2,0}}\end{array}$

\noindent $\begin{array}{l}{}-4
f_{4,0}{f_{0,2}}+10g^2_{1,1}f_{0,2}{f_{2,0}}+2f_{0,2}{
g_{3,1}}-4f_{0,2}{f_{2,0}}{g_{2,1}}+2f_{1,1}{g_{2,2}} )
{\alpha}^{5}+ ( 10f_{1,1}f_{2,0}{f_{2,1}}\end{array}$

\noindent $\begin{array}{l}{}+f_{1,1}{g_{0,2}}{
f_{3,0}}-2f_{0,2}f_{3,0}{g_{1,1}}-6f_{1,2}f_{2,0}{g_{1,1}}+2{
f_{1,2}}{g_{2,1}}-f_{1,2}{f_{3,0}}-4f_{0,2}{{f^3_{2,0}}}\end{array}$

\noindent $\begin{array}{l}{}+6{ f_{1,2}}{{f^2_{2,0}}}
-2f^3_{1,1}{g_{2,0}}+g^2_{1,1}{{
f^2_{1,1}}}+3f^2_{1,1}{f_{3,0}}+3f_{0,3}{g_{3,0}}+6
f_{0,2}f_{1,1}{g_{1,1}}{g_{2,0}}+2f_{2,2}{f_{2,0}}\end{array}$

\noindent $\begin{array}{l}{}+f^2_{1,1}{
g_{2,1}}+8f_{3,0}f_{2,0}{f_{0,2}}+8{f_{0,2}}f_{2,0}{g_{0,2}}{
g_{2,0}}-4f_{0,2}{f_{1,1}}f_{2,0}{g_{2,0}}+2f_{1,1}f_{2,0}{g_{0,2}}
{g_{1,1}}\end{array}$

\noindent
$\begin{array}{l}{}+2f_{2,1}f_{2,0}{g_{0,2}}+2f_{0,2}f_{1,1}{g_{3,0}}+2
f_{2,2}{g_{1,1}}-{f_{3,2}}-4f_{1,1}{{f^2_{2,0}}}{g_{0,2}}-4{{
f^2_{1,1}}}{{f^2_{2,0}}}+2f^2_{1,1}{g_{0,2}}{g_{2,0}}\end{array}$

\noindent $\begin{array}{l}{}-3{
f_{1,1}}{f_{3,1}}-f_{1,1}f_{2,0}{g_{1,2}}+3f_{1,3}{
g_{2,0}}+f_{1,1}{f_{1,2}}{g_{2,0}}-6{f_{0,3}} f_{2,0}{g_{2,0}}+6{{
g^2_{1,1}}}f_{0,2}{f_{2,0}}\end{array}$

\noindent $\begin{array}{l}{} {}+2f_{0,2}{g_{3,1}}-4f_{0,2}f_{2,0}{
g_{2,1}}+{f_{1,1}}{g_{2,2}} ) {\alpha}^{4}+( -{{
f^3_{1,1}}}{g_{2,0}}+4{f_{0,2}}{{f^2_{2,0}}}{
g_{1,1}}-f_{1,1}f_{2,0}{g_{1,2}}\end{array}$

\noindent $\begin{array}{l}{}-8f^2_{1,1}{{f^2_{2,0}}}-2
f_{1,1}{{f^2_{2,0}}}{g_{0,2}}-2{f_{0,2}}f_{1,1}{g_{3,0}}+f_{1,2}{
f_{3,0}}-4f_{1,2}{f_{2,0}}{g_{1,1}}+2f_{2,2}{f_{2,0}}\end{array}$

\noindent $\begin{array}{l}{} {}-f_{1,1}{
g_{1,1}}{f_{2,1}}+2f^2_{1,1}{f_{3,0}}+f^2_{1,1}f_{2,0}{
g_{1,1}}-{f_{3,2}}-f^2_{1,1}{g_{2,1}}-6f_{0,3}f_{2,0}{
g_{2,0}}-4f_{0,2}f_{2,0}{g_{2,1}}\end{array}$

\noindent $\begin{array}{l}{}-3f_{1,1}f_{1,2}{g_{2,0}}-2
f_{0,2}f_{3,0}{g_{1,1}}-4f_{0,2}{{f^3_{2,0}}}-f_{1,1}{f_{3,1}}+2
f_{1,1}f_{2,0}{f_{2,1}} ) {\alpha}^{3}+( -2{{
f^2_{1,1}}}{{f^2_{2,0}}}\end{array}$

\noindent $\begin{array}{l}{} {}+4f_{0,2}{{f^2_{2,0}}}{
g_{1,1}}+f_{1,2}{f_{3,0}}+f^3_{1,1}{g_{2,0}}+f^2_{1,1}{
f_{3,0}}+4{f_{0,2}}f_{1,1}f_{2,0}{g_{2,0}}+2f_{2,2}{f_{2,0}}+{
f_{1,1}}{f_{3,1}}\end{array}$

\noindent $\begin{array}{l}{}
-2f_{1,2}{f^2_{2,0}}+2f^2_{1,1}f_{2,0}{ g_{1,1}} ) {\alpha}^{2}+ (
-2f_{1,2}{{f^2_{2,0}}}-2 f_{1,1}f_{2,0}{f_{2,1}}-f^2_{1,1}f_{3,0} )
\alpha+2{{ f^2_{1,1}}}{{f^2_{2,0}}}.

\end{array}$

\subsection*{Appendix~B. Expression of  $\boldsymbol{\mathcal{P}_7(F)}$  and
$\boldsymbol{\mathcal{P}_8(F)}$ when $\boldsymbol{\beta=1}$ }  Applying the Normal
Form Algorithm to the map~(\ref{Fdeteo2}) we get that, when $\alpha^3\ne1$, then
$f_{1,4}^{(4)}=\mathcal{P}_7(F)/(\alpha^3-1)$ and
$g_{0,5}^{(4)}=\mathcal{P}_8(F)/(\alpha^3-1)$ where

\noindent $
\begin{array}{l}
\mathcal{P}_7(F):=f_{1,4} {\alpha}^{3}+ ( 3 f_{0,3} g_{1,2}-3
f_{1,4}-2  f_{0,3} f_{2,1}-2 f_{2,0} f_{0,4}+2 g_{1,3} f_{0,2} -2
f_{2,2} f_{0,2}\\
\phantom{x}+4 f_{0,4} g_{1,1} ) {\alpha}^{2}+ ( -4 g_{1,3} f_{0,2}+4
f_{0,3} f_{2,1}-8 f_{0,4}
 g_{1,1}-6 f_{0,3} g_{1,2}-4 g_{2,1}
f^2_{0,2}\\
\phantom{x}-10  f_{0,3} f_{0,2} g_{2,0}+3 f_{1,4}+3 f_{3,0}
f^2_{0,2}+4 f_{2,0} f_{0,4}+4 f_{2,2} f_{0,2} ) \alpha-f_{1,4}-2
f_{2,2}
f_{0,2}\\
\phantom{x}+2 g_{1,3} f_{0,2}+4 g_{2,1} f^2_{0,2}-2 f_{2,0}
f_{0,4}+2 {f_{2,0}}^{2} {f_{0,2} }^{2}-3 f_{3,0} f^2_{0,2}-8 g_{1,1}
f_{2,0}
f^2_{0,2}\\
\phantom{x}+8 f^2_{0,2} g^2_{1,1}+4 f_{0,4} g_{1,1}+3 f_{0,3}
g_{1,2}+10 f_{0,3} f_{0,2} g_{2,0}-2 f_{0,3}  f_{2,1},\\
\mathcal{P}_8(F):= g_{0,5} {\alpha}^{3}+ ( -3 g_{0,5}-f_{0,4}
g_{1,1}-g_{1,3} f_{0,2}-f_{0,3} g_{1,2} ) {\alpha}^{2}+ ( 2 f_{0,3}
g_{1,2}+2 f_{0,4} g_{1,1}\\
\phantom{x}+g_{2,1} f^2_{0,2}+2 f_{0,3} f_{0,2} g_{2,0}+2 g_{1,3}
f_{0,2}+3
g_{0,5} ) \alpha-f_{0,3} g_{1,2}-g_{0,5}-2 f_{0,3} f_{0,2} g_{2,0}\\
\phantom{x}- f_{0,4} g_{1,1}+g_{1,1} f_{2,0} f^2_{0,2}- g_{1,3}
f_{0,2}-g_{2,1} f^2_{0,2}-2 f^2_{0,2}g^2_{1,1}.
\end{array}
$

\subsection*{Appendix~C. Expression of  $\boldsymbol{C_2(B,\lambda)}$ in the proof
of Theorem~\ref{teolyness2p}}

\noindent $
\begin{array}{ll}
 C_2(B,\lambda):=& 8{B}^{12}{\lambda}^{25}+125{B}^{12}{\lambda}^
{24}+912{B}^{12}{\lambda}^{23}+4140{B}^{12}{\lambda}^{22}+13091{
B}^{12}{\lambda}^{21}+23{B}^{9}{\lambda}^{24}\end{array}$

\noindent $\begin{array}{rl}{} {}&+30388{B}^{12}{
\lambda}^{20}+264{B}^{9}{\lambda}^{23}+52493{B}^{12}{\lambda}^{19}
+1457{B}^{9}{\lambda}^{22}+64792{B}^{12}{\lambda}^{18}\end{array}$

\noindent $\begin{array}{rl}{} {}&+5130{B}^{
9}{\lambda}^{21}+44963{B}^{12}{\lambda}^{17}+12792{B}^{9}{\lambda}
^{20}+17{B}^{6}{\lambda}^{23}-22114{B}^{12}{\lambda}^{16}\end{array}$

\noindent $\begin{array}{rl}{} {}&+23399{
B}^{9}{\lambda}^{19}+152{B}^{6}{\lambda}^{22}-126694{B}^{12}{
\lambda}^{15}+30518{B}^{9}{\lambda}^{18}+685{B}^{6}{\lambda}^{21}\end{array}$

\noindent $\begin{array}{rl}{} {}&-
230443{B}^{12}{\lambda}^{14}+23012{B}^{9}{\lambda}^{17}+2027{B}^
{6}{\lambda}^{20}-285544{B}^{12}{\lambda}^{13}-7945{B}^{9}{\lambda
}^{16}\end{array}$

\noindent $\begin{array}{rl}{}
{}&+4241{B}^{6}{\lambda}^{19}-265465{B}^{12}{\lambda}^{12}-
59005{B}^{9}{\lambda}^{15}+6222{B}^{6}{\lambda}^{18}+23{B}^{3}{
\lambda}^{21}\end{array}$

\noindent $\begin{array}{rl}{}
{}&-182980{B}^{12}{\lambda}^{11}-111409{B}^{9}{\lambda}^
{14}+5530{B}^{6}{\lambda}^{17}+126{B}^{3}{\lambda}^{20}-80299{B}
^{12}{\lambda}^{10}\end{array}$

\noindent $\begin{array}{rl}{}
{}&-140407{B}^{9}{\lambda}^{13}-138{B}^{6}{\lambda
}^{16}+356{B}^{3}{\lambda}^{19}-280{B}^{12}{\lambda}^{9}-131599{
B}^{9}{\lambda}^{12}\end{array}$

\noindent $\begin{array}{rl}{}
{}&-10552{B}^{6}{\lambda}^{15}+644{B}^{3}{\lambda
}^{18}+37544{B}^{12}{\lambda}^{8}-90967{B}^{9}{\lambda}^{11}-21809
{B}^{6}{\lambda}^{14}\end{array}$

\noindent $\begin{array}{rl}{}
{}&+723{B}^{3}{\lambda}^{17}+40086{B}^{12}{
\lambda}^{7}-40111{B}^{9}{\lambda}^{10}-28180{B}^{6}{\lambda}^{13}
+253{B}^{3}{\lambda}^{16}+8{\lambda}^{19}\end{array}$

\noindent $\begin{array}{rl}{} {}&+26571{B}^{12}{\lambda}
^{6}-883{B}^{9}{\lambda}^{9}-26229{B}^{6}{\lambda}^{12}-844{B}^{
3}{\lambda}^{15}+29{\lambda}^{18}+12701{B}^{12}{\lambda}^{5}\end{array}$

\noindent $\begin{array}{rl}{} {}&+17318
{B}^{9}{\lambda}^{8}-17474{B}^{6}{\lambda}^{11}-2101{B}^{3}{
\lambda}^{14}+36{\lambda}^{17}+4481{B}^{12}{\lambda}^{4}+18449{B
}^{9}{\lambda}^{7}\end{array}$

\noindent $\begin{array}{rl}{} {}&
-6870{B}^{6}{\lambda}^{10}-2804{B}^{3}{\lambda}^
{13}+34{\lambda}^{16}+1143{B}^{12}{\lambda}^{3}+12036{B}^{9}{
\lambda}^{6}+902{B}^{6}{\lambda}^{9}\end{array}$

\noindent $\begin{array}{rl}{}&-2563{B}^{3}{\lambda}^{12}-33
{\lambda}^{15}+199{B}^{12}{\lambda}^{2}+5561{B}^{9}{\lambda}^{5}
+4012{B}^{6}{\lambda}^{8}-1532{B}^{3}{\lambda}^{11}-71{\lambda}^
{14}\end{array}$

\noindent $\begin{array}{rl}{}
&+21{B}^{12}\lambda+1827{B}^{9}{\lambda}^{4}+3635{B}^{6}{
\lambda}^{7}-364{B}^{3}{\lambda}^{10}-137{\lambda}^{13}+{B}^{12}+
404{B}^{9}{\lambda}^{3}\end{array}$

\noindent $\begin{array}{rl}{}
&+2079{B}^{6}{\lambda}^{6}+335{B}^{3}{
\lambda}^{9}-92{\lambda}^{12}+53{B}^{9}{\lambda}^{2}+839{B}^{6}{
\lambda}^{5}+493{B}^{3}{\lambda}^{8}-56{\lambda}^{11}\end{array}$

\noindent $\begin{array}{rl}{} &+3{B}^{9}
\lambda+232{B}^{6}{\lambda}^{4}+348{B}^{3}{\lambda}^{7}+8{
\lambda}^{10}+37{B}^{6}{\lambda}^{3}+149{B}^{3}{\lambda}^{6}+29{
\lambda}^{9}+2{B}^{6}{\lambda}^{2}\end{array}$

\noindent $\begin{array}{rl}{} &+35{B}^{3}{\lambda}^{5}+25{
\lambda}^{8}+3{B}^{3}{\lambda}^{4}+9{\lambda}^{7}+{\lambda}^{6}.
\end{array}
 $

\medskip

\end{document}